\newtheorem{Thm}{Theorem}[section]
\newtheorem{Lem}[Thm]{Lemma}
\newtheorem{Prop}[Thm]{Proposition}
\newtheorem{Rem}[Thm]{Remark}
\numberwithin{equation}{section}
\newcommand{\R}{\mathbb{R}}
\newcommand{\Z}{\mathbb{Z}}
\newcommand{\cC}{{\mathcal C}}
\newcommand{\cS}{{\mathcal S}}
\newcommand{\eps}{\varepsilon}
\newcommand{\al}{\alpha}
\newcommand{\be}{\beta}
\newcommand{\de}{\delta}
\newcommand{\De}{\Delta}
\newcommand{\la}{\lambda}
\newcommand{\La}{\Lambda}
\newcommand{\Om}{\Omega}
\newcommand{\om}{\omega}
\newcommand{\opint}{\text{\rm int\,}}
\newcommand{\id}{\text{\rm id}}
\newcommand{\pa}{\partial}
\newcommand{\ind}{\text{\rm ind}}
\newcommand{\essinf}{\text{\rm ess\,inf}}
\newcommand\loc{\mathrm{loc}}
\newcommand\meas{\mathrm{meas}}
\newcommand{\abs}[1]{\lvert#1\rvert}
\newcommand{\weakto}{\rightharpoonup}
\newenvironment{altproof}[1]
{\noindent
{\em Proof of {#1}}.}
{\nopagebreak\mbox{}\hfill $\Box$\par\addvspace{0.5cm}}
\begin{document}

\title{Nonlinear Schr\"odinger equations near an infinite well potential}

\author{\sc{Thomas Bartsch} \and \sc{Mona Parnet}}

\date{}
\maketitle

\begin{abstract}
The paper deals with standing wave solutions of the 
dimensionless nonlinear Schr\"{o}dinger equation
\begin{equation}\label{eq:abs1}
i\Phi_t(x,t) = -\Delta_x\Phi +V_\la(x)\Phi + f(x,\Phi),
 \quad x\in\R^N,\ t\in\R,\tag{$NLS_\la$}
\end{equation}
where the potential $V_\la:\R^N\to\R$ is close to an infinite well 
potential $V_\infty:\R^N\to\R$, i.~e.\ $V_\infty=\infty$ on an 
exterior domain $\R^N\setminus\Om$, $V_\infty|_\Om\in L^\infty(\Om)$,
and $V_\la\to V_\infty$ as $\la\to\infty$ in a sense to be made precise.
The nonlinearity may be of Gross-Pitaevskii type. A solution of 
\eqref{eq:abs1} with $\la=\infty$ vanishes on $\R^N\setminus\Om$ and 
satisfies Dirichlet boundary conditions, hence it solves
\begin{equation}\label{eq:abs2}
\left\{
\begin{aligned}
i\Phi_t(x,t) &= -\Delta_x\Phi +V_\la(x)\Phi + f(x,\Phi),
        &&\quad x\in\Om,\ t\in\R\\
\Phi(x,t) &= 0 &&\quad x\in\pa\Om,\ t\in\R.
\end{aligned}
\tag{$NLS_\infty$}
\right.
\end{equation}
We investigate when a solution $\Phi_\infty$ of the infinite well
potential \eqref{eq:abs2} gives rise to nearby solutions $\Phi_\la$ of 
the finite well potential \eqref{eq:abs1} with $\la\gg1$ large. 
Considering \eqref{eq:abs2} as a singular limit of \eqref{eq:abs1} we
prove a kind of singular continuation type results.
\end{abstract}


{\bf Keywords}: nonlinear Schr\"odinger equations, infinite well potential,
deep potential well, nonlinear eigenvalue problems, singular limit, 
variational methods, topological methods, singular continuation\\

{\bf  AMS subject classification}: 35J20, 35J61, 35J91, 35Q55, 58E05


%
\section{Introduction}\label{sec:intro}
Infinite well potentials like the infinite square well or the
infinite spherical well are helpful as instructive models 
to describe confined particles in quantum mechanical systems.
They are often used as a starting point for solving finite well 
problems. In this paper we investigate nonlinear Schr\"odinger
equations, like the Gross-Pitaevskii equation, with a potential 
$V_\la:\R^N\to\R$ close to an infinite well potential 
$V_\infty:\R^N\to\R$. More precisely, $V_\infty=\infty$ on an 
exterior domain $\R^N\setminus\Om$, and $V_\infty|_\Om\in L^\infty(\Om)$. 
As $\la\to\infty$ the potential depth of $V_\la$ becomes infinite, 
i.~e.\ $V_\la\to V_\infty$, in a sense to be made precise below. Our
goal is to give rigorous proofs for the passage from the infinite 
well potential to the finite well potential. \\

We are interested in standing waves $\Phi(t,x)=e^{i\om t}u(x)$ of the 
finite well nonlinear Schr\"odinger equation
\begin{equation}\label{eq:NLS-finite}
i\Phi_t(x,t) = -\Delta_x\Phi(x,t)+V_\la(x)\Phi+f(x,\Phi),
\quad x\in\R^N,\ t\in\R,
\tag{$NLS_\la$}
\end{equation}
where $V_\la(x)\to V_\infty(x)$ as $\la\to\infty$. For $\la=\infty$ 
a solution should vanish in $\R^N\setminus\Om$ and satisfy Dirichlet
boundary conditions on $\Om$, hence it is a solution of the singular limit 
problem:
\begin{equation}\label{eq:NLS-infinite}
\left\{
\begin{aligned}
i\Phi_t(x,t) & = -\Delta_x\Phi +V_\la(x)\Phi + f(x,\Phi),
             &&\quad x\in\Om,\ t\in\R,\\
\Phi(x,t) &= 0 &&\quad x\in\pa\Om,\ t\in\R.
\end{aligned}
\right.
\tag{$NLS_\infty$}
\end{equation}
The question we address in this paper is: suppose we know a solution 
$\Phi_\infty$ of \eqref{eq:NLS-infinite}, does there exist a nearby 
solution $\Phi_\la$ of \eqref{eq:NLS-finite} for $\la$ large?\\

Standing wave solutions of \eqref{eq:NLS-finite} correspond to 
solutions of the stationary nonlinear Schr\"{o}dinger equation
\begin{equation}\label{eq:NLS1}
\left\{
\begin{aligned}
& -\De u+V_\la(x)u = f(x,u)&&\quad\text{for } x\in\R^N;\\
& u(x)\to 0 &&\quad\text{as } |x|\to \infty,
\end{aligned} \tag{$S_\la$}
\right.
\end{equation}
where we incorporated the term $\om u$ generated by the ansatz into 
the potential without changing notation. For $\la=\infty$ we are 
similarly led to consider
\begin{equation}\label{eq:limit1}
-\De u+V_\infty(x)u = f(x,u),\quad u\in H^1_0(\Omega),\tag{$S_\infty$}
\end{equation}
as a singular limit of $(S_\la)$ as $\la \to \infty$. The original 
question can now be reformulated as which solutions $u_\infty$ of 
\eqref{eq:limit1} appear as limits of solutions $u_\la$ of 
\eqref{eq:NLS1}. Solutions of \eqref{eq:limit1} can be obtained via 
variational or topological methods. We provide conditions on the
convergence of $V_\la\to V_\infty$ and on $f$ such that an isolated 
solution $u_\infty$ of \eqref{eq:limit1} which can be found by 
variational or topological methods gives rise to a family of 
solutions $u_\la$ of \eqref{eq:NLS1}. We include of course the 
generic case where $u_\infty$ is a nondegenerate solution of 
\eqref{eq:limit1}. \\

For the proofs we develop an abstract functional analytic approach 
in order to deal with the above type of singular limit problem. 
Our results may be thought of as being continuation results near a 
singular limit: For $\la<\infty$ we look for solutions of an 
equation $F_\la(u)=0$ defined on $H^1(\R^N)$, whereas the limit 
equation $F_\infty(u)=0$ is only defined for $u\in H^1_0(\Omega)$.
Some of the methods we develop can also be applied to more general 
nonlinear eigenvalue problems that are not necessarily of 
variational type. \\

The paper is organized as follows. In Section~\ref{sec:NLS} we 
state our main results about \eqref{eq:NLS1}, and we discuss related
results. Then in Section~\ref{sec:crit-point} we formulate the 
functional analytic setting which will be considered throughout the 
paper. Here we also state our main abstract results about solutions
of nonlinear equations near a singular parameter limit. The abstract 
results as well as the results about $(S_\la)$ will be proved in 
sections~\ref{sec:variational}~--~\ref{sec:nondeg}.

\section{NLS near an infinite well potential}\label{sec:NLS}
We begin with collecting our assumptions on the potentials $V_\la$.
These are given in the form  $V_\la= a_0+\la a$, so the problem we
consider is
\begin{equation}\label{eq:NLS}
\left\{
\begin{aligned}
& -\De u+(a_0(x)+\la a(x))u = f(x,u)&&\quad\text{for } x\in\R^N;\\
& u(x)\to 0 &&\quad\text{as } |x|\to \infty,
\end{aligned} 
\tag{$S_\la$}
\right.
\end{equation}
and the limit problem is
\begin{equation}\label{eq:limit}
-\De u+a_0(x)u = f(x,u),\quad u\in H^1_0(\Omega).
\tag{$S_\infty$}
\end{equation}

The distinguishing feature is that the potential 
$a\in L^\infty_{loc}(\R^N)$ satisfies $a\ge 0$ and 
$a^{-1}(0)=\overline\Om$ with $\Om\subset\R^N$ nonempty, open, and 
bounded. Consequently, $V_\la(x)\to\infty$ as $\la\to\infty$ for 
$x\notin\overline\Om$. \\

In order to describe the assumptions on $a$ and $a_0$ we need some 
notation. For $x\in\R^N$ and $r>0$ we set 
$B_r(x):=\{y\in\R^N:|y-x| < r\}$. We also set
$K_r^c:=\{x\in\R^n:|x|_\infty>r\}$. Let
$\mu_N(-\De+V_{\la},G)$ be the infimum of the spectrum of
$-\De+V_{\la}$ on an open subset $G\subset\R^N$ with Neumann boundary
conditions, i.~e.
\[
\mu_N(-\De+V_{\la},G)=\inf_{\psi\in H^1(G)\setminus \{0\}}
\frac{\int_G(|\nabla\psi|^2+V_{\la}\psi^2)dx}{\|\psi\|^2_{L^2(G)}}.
\]
Our basic hypotheses on the potential are:
\begin{itemize}
\item[$(V_1)$] $a_0\in L^{\infty}_\loc(\R^N)$ and
  $\essinf\ a_0 > -\infty$.
\item[$(V_2)$] $a\in L^\infty_\loc(\R^N)$, $a(x)\geq 0$ and
  $\Omega:=\opint\ a^{-1}(0)$ is a non-empty open subset of $\R^N$ with
  Lipschitz boundary.
\item[$(V_3)$] There exists a sequence $R_j\to\infty$ such that
\[
  \lim_{\la\to\infty}\liminf_{j\to\infty}
  \mu_N(-\De+V_{\la},K_{R_j}^c) = \infty.
\]
\end{itemize}

The reader can find a discussion of condition $(V_3)$, in particular various
equivalent conditions, in \cite{bartsch-pankov-wang:2001}. Condition $(V_3)$
holds, for instance, if $a$ satisfies:
\begin{itemize}
\item[$(V_4)$] There exist $M>0$ and $r>0$ such that
\[
 \meas(\{x\in B_r(y): a(x)<M\}) \to 0 \quad\text{as }|y|\to\infty
\]
where $\meas$ denotes the Lebesgue measure.
\end{itemize}
$(V_3)$ implies that the embedding $H^1_0(\Om)\hookrightarrow L^p(\Om)$ is
compact for $2\le p<\infty$. Observe that $(V_3)$ and $(V_4)$ allow that
$\Om$ may be unbounded. For some results we require the stronger 
condition
\begin{itemize}
\item[$(V_5)$] The form domain 
\[
E:=\left\{u\in H^1(\R^N): \int_{\R^N} a_0u^2 < \infty,\ 
           \int_{\R^N} au^2 < \infty\right\}
\]
embeds compactly into $L^2(\R^N)$.
\end{itemize}

This holds, for instance, if $a_0(x)\to\infty$ or $a(x)\to\infty$ as 
$|x|\to\infty$, a condition usually satisfied for confining potentials.
$(V_5)$ also holds under the weaker condition
\begin{itemize}
\item[$(V_6)$] For any $M>0$ and any $r>0$ there holds:
\[
\meas\{x\in B_r(y): a(x\le M\} \to 0\quad\text{as } |y|\to\infty
\]
\end{itemize}

A proof that $(V_6)$ implies $(V_5)$ can be found in 
\cite{kondratev.shubin:1999}; see also \cite{molchanov:1953}.
\\

Concerning the nonlinearity $f$ we only require that

\begin{itemize}
\item[($f_1$)] $f$ is a Carath\'{e}odory function, and there exists
 constants $C>0$,  $2<q<p<2^*$ such that
 \begin{displaymath}
  \abs{f(x,t)}\leq C(\abs{t}^{p-1}+\abs{t}^{q-1})\quad
   \text{for }t\in\R,\text{ a.~e.}\ x\in\R^N.
 \end{displaymath}
\end{itemize}
This includes the model nonlinearity $f(x,u)=W(x)\cdot|u|^{p-2}u$ with 
$2 < p < 2N/(N-2)^+$ and $W\in L^\infty(\R^N)$, which appears in the 
Gross-Pitaevskii equation.\\

We define $E_\infty:=H^1_0(\Om)$ provided with the scalar product
\[
\langle u,v\rangle := \int_{\Om} (\nabla u \nabla v + (b+a_0)uv)\,dx
\]
where $b:=1-\essinf\,a_0$. As a consequence of $(V_1)$ and $(V_2)$ this
induces a norm which is equivalent to the standard norm of $H^1_0(\Om)$.
Setting $F(x,u):=\int_0^u f(x,t)\,dt$, it is well known that the functional
$J_\infty: E_\infty \to \R$ defined by
\[
\begin{aligned}
 J_\infty(u)
  &= \frac12\int_{\Om} (|\nabla u|^2+a_0 u^2)\,dx - \int_{\Om} F(x,u)\,dx\\
  &= \frac12\|u\|^2 - \int_{\Om} \left(\frac{b}{2}u^2 + F(x,u)\right)\,dx
\end{aligned}
\]
is of class $\cC^1$, and that critical points of $J_\infty$ are solutions of
\eqref{eq:limit}.\\

Recall that the critical groups of an isolated critical point $u$ of a
functional $J:E\to\R$ are defined as
$C_k(J,u) := H_k(J^c,J^c\setminus\{u\})$ where $c:=J(u)$. Here $H_*$ is
singular homology with coefficients in a commutative ring $R$ with unit;
typically $R=\Z$ or $R$ is a field.\\

Now we can state our first result.

\begin{Thm}\label{thm:crit-group1}
Assume $(V_1)-(V_3)$ and $(f_1)$ hold. Let $u_\infty\in E_\infty$ be an 
isolated solution of \eqref{eq:limit} with nontrivial critical groups
$C_*(J_\infty,u_\infty)$. Then there exists $\La\ge1$ such that for each
$\la\ge\La$ there exists a solution $u_\la\in E$ of \eqref{eq:NLS} with
$u_\la\to u_\infty$ in $E$ as $\la\to\infty$.
\end{Thm}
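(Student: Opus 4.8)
**Proof strategy for Theorem 2.1 (persistence of a critical point with nontrivial critical groups under the singular limit $\lambda\to\infty$).**

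The plan is to work in the space $E$ with its $\lambda$-dependent norm $\|u\|_\lambda^2 = \int_{\R^N}(|\nabla u|^2 + (b+a_0+\lambda a)u^2)$ and the associated energy functionals $J_\lambda:E\to\R$ whose critical points solve $(S_\lambda)$. The first block of work is \emph{compactness and convergence of the functionals}: using $(V_3)$ one shows that as $\lambda\to\infty$ the sublevel sets of the quadratic form force mass to concentrate on $\Omega$ — more precisely, if $u_\lambda$ is bounded in $E$ uniformly in $\lambda$ then $\lambda\int_{\R^N}au_\lambda^2$ stays bounded, hence $\int_{\R^N\setminus\Omega}u_\lambda^2\to 0$, and along a subsequence $u_\lambda\weakto u$ with $u\in H^1_0(\Omega)=E_\infty$; combining this with the compact embedding $H^1_0(\Omega)\hookrightarrow L^p(\Omega)$ (guaranteed by $(V_3)$) and with a tail estimate near infinity, one upgrades to $u_\lambda\to u$ strongly in $L^p(\R^N)$ for $2\le p<2^*$, so that the nonlinear term $\int F(x,u_\lambda)\to\int F(x,u)$. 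This is the mechanism by which $J_\infty$ is the "$\Gamma$-limit" of $J_\lambda$ along bounded sequences, and it is the technical heart of the singular-limit comparison.

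With that in hand, the topological step is a \emph{localization via critical groups}. Since $u_\infty$ is isolated with $C_*(J_\infty,u_\infty)\neq 0$, pick a closed ball $B=\overline{B_\rho(u_\infty)}\subset E_\infty$ containing no other critical point of $J_\infty$, and regard $B$ (thickened in the normal $\R^N\setminus\Omega$ directions) as a neighbourhood in $E$. One wants to run a deformation/excision argument showing that for $\lambda$ large $J_\lambda$ must have a critical point inside (a slight enlargement of) $B$: if not, a pseudo-gradient flow for $J_\lambda$, which exists because $J_\lambda$ satisfies the Palais–Smale condition on bounded sets by the compactness above, would deform the relevant sublevel pair of $J_\lambda$ near $u_\infty$ without fixed points, and by the $C^0$- (indeed $C^1$-) proximity of $J_\lambda$ to $J_\infty$ on $B$ this would yield $C_*(J_\infty,u_\infty)=0$, a contradiction. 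Concretely I expect this to be packaged through the abstract "singular continuation" result announced in Section 3: the pair $(F_\lambda,F_\infty)=(J_\lambda',J_\infty')$ fits the abstract framework, and the theorem there produces, from the nonvanishing of the critical groups (a degree-type invariant), a zero $u_\lambda$ of $F_\lambda$ near $u_\infty$ for all $\lambda\ge\Lambda$.

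Finally one must show \emph{$u_\lambda\to u_\infty$ in $E$}, not merely that some critical point exists nearby. Suppose $u_\lambda$ is the constructed solution, so $u_\lambda$ is bounded in $E$ (the construction keeps it in a fixed ball); by the compactness step $u_\lambda\to u$ strongly in $L^p$ and weakly in $E$, with $u\in E_\infty$ a critical point of $J_\infty$ lying in $B$, hence $u=u_\infty$ by isolation. Strong convergence in $E$ then follows by testing the equations: $\|u_\lambda\|_\lambda^2 = \int f(x,u_\lambda)u_\lambda + \int(b u_\lambda^2) \to \int f(x,u_\infty)u_\infty + \int_\Omega b u_\infty^2 = \|u_\infty\|^2$, and lower semicontinuity of the norms upgrades weak to strong convergence, which in particular kills the exterior tail $\lambda\int a u_\lambda^2\to 0$.

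The main obstacle I anticipate is the compactness/convergence step across the singular limit: one is comparing functionals on \emph{different} spaces (the $H^1(\R^N)$-topologies with $\lambda$-dependent norms versus the fixed $H^1_0(\Omega)$-topology), so verifying a uniform-in-$\lambda$ Palais–Smale condition and the $C^1$-closeness of $J_\lambda$ to $J_\infty$ on the relevant neighbourhood — including control of how a pseudo-gradient flow for $J_\lambda$ interacts with the thin exterior region where $\lambda a$ is huge — requires care. The topological bookkeeping (excision, deformation, reading off critical groups) is then comparatively standard once the abstract continuation theorem of Section 3 is available.
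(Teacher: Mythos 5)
Your overall route is the paper's: pass to the abstract framework of Section~\ref{sec:crit-point}, verify the convergence of $k_\la$ to $k_\infty\circ P$ on $\la$-balls (this is Lemma~\ref{lem:deg-est}, proved exactly by the $(V_3)$-based $L^p$-concentration argument you describe), thicken a Gromoll--Meyer pair $(W,W_-)$ for $u_\infty$ in $E_\infty$ to $W\times B_{\de,\la}(0,E_\infty^\perp)$, and derive a contradiction with $C_*(J_\infty,u_\infty)\cong H_*(W,W_-)\ne0$ from a pseudo-gradient deformation if $J_\la$ had no critical point there. Your final convergence argument ($Qu_\la\to0$, identification of the weak limit with $u_\infty$ by isolation, then norm convergence plus weak lower semicontinuity) is also consistent with the paper, which gets $u_\la\to u_\infty$ by letting $\de\to0$ and using that $J_\infty$ (not $J_\la$) satisfies Palais--Smale in $W$.

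There is, however, one genuine gap: you assert that ``$J_\lambda$ satisfies the Palais--Smale condition on bounded sets by the compactness above.'' Under $(V_1)$--$(V_3)$ and $(f_1)$ this is not available for \emph{fixed finite} $\la$: the compactness you establish (strong $L^p(\R^N)$ convergence of bounded sequences) is driven by $\la_n\to\infty$, which forces the weak limit into $H^1_0(\Om)$ where the embedding is compact; for a single $\la<\infty$ the form domain $E$ need not embed compactly into $L^2(\R^N)$ --- that is precisely the extra hypothesis $(V_5)$ used in Theorem~\ref{thm:index1} --- so $k_\la$ need not be completely continuous and Palais--Smale can fail. The paper explicitly sidesteps this: in Lemma~\ref{lem:ex-crit-pt} it only proves $\inf_{W_{\de,\la}}\|\nabla_\la J_\la\|_\la>0$ in the absence of critical points, using merely the \emph{weak} sequential continuity of $k$ (condition $(J_2)$) to show that a sequence with $\nabla_\la J_\la(u_n)\to0$ produces a critical point as a weak limit, without any strong convergence. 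That weaker lower bound is enough to run the deformation to the exit set in finite time. Your argument as written would either need $(V_5)$ (weakening the theorem) or must be repaired along these lines; the rest of the topological bookkeeping (transversality of $\nabla_\la J_\la$ to $\pa W\times B_{\de,\la}$, i.e.\ Lemma~\ref{lem:gromoll}) then goes through as you anticipate.
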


\begin{Rem}
If $0$ does not belong to the spectrum of $-\Delta+a_0$ in $H^1_0(\Om)$
then this holds true for $-\Delta+a_0+\la a$ for $\la$ large. Then the 
solutions which we obtain in Theorem \ref{thm:crit-group1} and in the 
theorems below decay exponentially; see \cite{pankov:2008}.
\end{Rem}

If $C_*(J_\infty,u_\infty)=0$ then the solution $u_\infty$ cannot be discovered 
using variational methods, and it can disappear under small perturbations.
In our next result we strengthen the hypotheses by assuming that $u_\infty$
has nontrivial index. Consider the functional
\[
 K_\infty: E_\infty\to\R,\quad
 K_\infty(u) = \int_{\Om}\left(\frac{b}{2}u^2 + F(x,u)\right)\,dx,
\]
and define its gradient $k_\infty=\nabla K_\infty:E_\infty\to E_\infty$ with 
respect to the above scalar product on $E_\infty$. Then $k_\infty$ is 
completely continuous because $p<2^*$ in $(f_1)$. The index of $u_\infty$ is 
then defined by
\[
\ind(k_\infty,u_\infty)
 := \deg(\id - k_\infty, B_\de(u_\infty,E_\infty),0).
\]
Here $\deg$ denotes the Leray-Schauder degree, $\de>0$ is small so 
that $u_\infty$ is the only solution of \eqref{eq:limit} in the 
$\de$-ball $B_\de(u_\infty,E_\infty)$ of $u_\infty$ in $E_\infty$.

\begin{Thm}\label{thm:index1}
Assume $(V_1),(V_2),(V_5)$ and $(f_1)$ hold. Let $u_\infty\in E_\infty$ be an 
isolated solution of \eqref{eq:limit} with nontrivial index
$\ind(k_\infty,u_\infty)$. Then there exists a connected set
\[
\cS \subset \{(\la,u)\in\R\times E:\ u\text{ solves }\eqref{eq:NLS}\}
    \subset \R\times E
\]
such that $\cS$ covers a parameter interval $[\La,\infty)$ for some $\La\ge1$.
Morevover, $u_n\to u_\infty$ for any sequence $(\la_n,u_n)\in E$ with
$\la_n\to\infty$.
\end{Thm}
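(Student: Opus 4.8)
The plan is to realize both the finite-well problems $(S_\la)$ and the limit problem $(S_\infty)$ as fixed-point equations for completely continuous maps on a common ambient space, so that Leray--Schauder degree theory and its homotopy invariance can be applied uniformly in $\la$. Under $(V_5)$ the form domain $E$ embeds compactly into $L^2(\R^N)$, hence into $L^p(\R^N)$ for $2\le p<2^*$ by interpolation with the Sobolev embedding; this is what makes the Nemytskii-type gradient $k_\la:=\nabla K_\la$ (with $K_\la$ the analogue on $E$, built from the same $b$ and $F$) completely continuous on $E$, uniformly in $\la$. A solution of $(S_\la)$ is then a zero of $\id-(\text{resolvent of }-\Delta+V_\la+b)\circ(\text{lower-order part})$, i.e.\ of $\id-T_\la$ with $T_\la$ completely continuous; one should set this up so that $T_\la$ depends continuously on $\la\in[\La,\infty]$ when $E_\infty=H^1_0(\Om)$ is viewed as the closed subspace $\{u\in E: u=0 \text{ a.e.\ on } \R^N\setminus\Om\}$ of $E$ — this identification, and the fact that $T_\infty(E)\subset E_\infty$, is the technical heart of the singular-limit bookkeeping.

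The key continuation step is: since $u_\infty$ is an isolated solution of $(S_\infty)$ with $\ind(k_\infty,u_\infty)\neq0$, choose $\de>0$ so small that $u_\infty$ is the only zero of $\id-T_\infty$ in $\overline{B_\de(u_\infty,E)}$ and $\deg(\id-T_\infty,B_\de(u_\infty,E),0)=\ind(k_\infty,u_\infty)\neq0$ (one must check that the degree computed in the big space $E$ agrees with the one computed in $E_\infty$, which follows from $T_\infty(E)\subset E_\infty$ together with the reduction property of the degree). By continuity of $\la\mapsto T_\la$ at $\la=\infty$ and a standard compactness/uniform-convergence argument — here is where one again invokes that $T_\la\to T_\infty$ uniformly on bounded sets, using $(V_1),(V_2),(V_5)$ and $(f_1)$, exactly as in the proof of Theorem~\ref{thm:crit-group1} — there is $\La\ge1$ such that for all $\la\in[\La,\infty]$ the map $\id-T_\la$ has no zero on $\pa B_\de(u_\infty,E)$, so $\deg(\id-T_\la,B_\de(u_\infty,E),0)=\ind(k_\infty,u_\infty)\neq0$ for every such $\la$; in particular each $(S_\la)$ has a solution in $B_\de(u_\infty,E)$.

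To upgrade this family of solutions to a \emph{connected} set $\cS$ covering $[\La,\infty)$, the plan is to apply the global continuation principle of Leray--Schauder / Rabinowitz-type (Kielh\"ofer's or Alexander--Yorke's version) to the completely continuous map $\Psi:[\La,\infty]\times E\to E$, $\Psi(\la,u)=u-T_\la(u)$, on the set where $\id-T_\la$ is admissible. Because the degree is nonzero on the slice $\{\infty\}\times B_\de(u_\infty,E)$ and, by the a~priori bounds coming from $(f_1)$ (superlinear but subcritical, so solution sets are bounded on bounded $\la$-intervals) together with the Palais--Smale-type compactness furnished by $(V_5)$, the solution continuum emanating from $(\infty,u_\infty)$ cannot be ``lost'' — it is either unbounded in $E$ or returns, but in the $\la$-direction it must project onto a connected subset of $[\La,\infty]$ containing $\infty$; one then trims $\cS$ to its component through $(\infty,u_\infty)$ and argues, using that there are no solutions off to infinity for $\la$ in any compact subinterval, that its $\la$-projection is all of $[\La,\infty)$. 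Finally, the convergence statement $u_n\to u_\infty$ whenever $\la_n\to\infty$ is precisely the singular-limit compactness lemma that underlies the whole construction: any sequence of solutions with $\la_n\to\infty$ is bounded in $E$ (a~priori bound), is forced by $V_{\la_n}\to V_\infty$ to have mass concentrating in $\Om$ hence a weak limit in $E_\infty$, and by complete continuity of the $T_{\la_n}$ converges strongly to a solution of $(S_\infty)$ in the $\de$-ball, which by the isolation of $u_\infty$ must be $u_\infty$ itself.

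I expect the main obstacle to be the rigorous identification of the limiting fixed-point operator and the proof that $T_\la\to T_\infty$ in the relevant uniform sense across the singular parameter $\la=\infty$ — i.e.\ showing that the resolvent $(-\Delta+V_\la+b)^{-1}$ acting on $E$ converges (strongly, and uniformly on the compact relevant sets, as a map into $E$) to the resolvent of the Dirichlet operator on $\Om$, so that zeros cannot escape the ball $B_\de(u_\infty,E)$ as $\la\to\infty$. This is exactly the ``singular continuation'' phenomenon the abstract framework of Section~\ref{sec:crit-point} is designed to handle, and once that operator-convergence input is in place the degree-theoretic and global-continuation arguments above are fairly standard.
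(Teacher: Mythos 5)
Your proposal follows essentially the same route as the paper: there, $(V_5)$ and $(f_1)$ are used only to check that the gradient $k$ (hence each $k_\la$, the gradient of $K$ in the $\la$-dependent inner product, which is exactly your resolvent operator $T_\la$) is completely continuous, after which Theorem~\ref{thm:index2} reduces the Leray--Schauder degree of $\id-k_\infty\circ P$ on the product neighborhood $B_{\de,\la}(u_\infty)$ to $\ind(k_\infty,u_\infty)$, transfers it to $\id-k_\la$ via the uniform estimate of Lemma~\ref{lem:deg-est} together with $\|u-k_\infty(Pu)\|\ge\|Pu-k_\infty(Pu)\|$, and invokes Alexander's standard connectivity/continuation result to produce $\cS$ and the limit $u_n\to u_\infty$. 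The one point to correct is your appeal to a priori bounds on the solution set ``coming from $(f_1)$'' in a global Rabinowitz-type alternative: $(f_1)$ is a pure growth condition and yields no such bounds, but none are needed, since the whole continuation takes place inside the small cylinder $[\La,\infty)\times B_{\de,\la}(u_\infty)$ on whose lateral boundary $\id-k_\la$ has no zeros --- which is precisely the local, nonvanishing-degree argument you also describe.
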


The assumption $\ind(k_\infty,u_\infty)\ne0$ in Theorem~\ref{thm:index1}
is stronger than the assumption $C_*(J_\infty,u_\infty)\ne0$ in
Theorem~\ref{thm:crit-group1} because of the Poincar\'e-Hopf formula:
\begin{equation}\label{eq:Poincare-Hopf}
\ind(k_\infty,u_\infty)
 = \sum_{i=0}^\infty (-1)^i {\rm rank}\, C_i(J_\infty,u_\infty).
\end{equation}

Surprisingly, the strong assumption $(V_5)$ can be replaced by $(V_3)$ if
$f$ satisfies 

\begin{itemize}
\item[($f'_1$)] $f$ is differentiable in $t$, $f$ and $f_t$ are
 Carath\'{e}odory functions and there exist constants $c>0$,
 $2<q<p<2^*=2N/(N-2)^+$ such that
 \[
  \abs{f_t(x,t)}\leq c(\abs{t}^{p-2}+\abs{t}^{q-2})\quad
   \text{for }t\in\R,\text{ a.~e.}\ x\in\R^N;
 \]
\end{itemize}

With this condition the functional $J_\la$ is of class $C^2$.

\begin{Thm}\label{thm:index1a}
Assume $(V_1)-(V_3)$ and $(f'_1)$ hold. Let $u_\infty\in E_\infty$ be an
isolated solution of \eqref{eq:limit} with nontrivial index
$\ind(k_\infty,u_\infty)$. Then the conclusion of 
Theorem~\ref{thm:index1} holds.
\end{Thm}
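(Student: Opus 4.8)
The plan is to recast \eqref{eq:NLS} and \eqref{eq:limit} as fixed-point equations and to run a global continuation in the parameter $\la$ near the singular value $\la=\infty$, using the abstract machinery of Section~\ref{sec:crit-point}; compared with Theorem~\ref{thm:index1} the only change will be that the role of the global compactness $(V_5)$ is taken over by $(V_3)$ together with the $C^2$-regularity furnished by $(f'_1)$. For $\la<\infty$ set $E:=\{u\in H^1(\R^N):\int_{\R^N}a\,u^2<\infty\}$ and let $E_\la$ denote $E$ equipped with the inner product $\langle u,v\rangle_\la=\int_{\R^N}(\nabla u\cdot\nabla v+(b+a_0+\la a)uv)$, where $b=1-\essinf a_0$; put $k_\la=\nabla_\la K_\la$ with $K_\la(u)=\int_{\R^N}(\frac{b}{2}u^2+F(x,u))$, so that \eqref{eq:NLS} reads $F_\la(u):=u-k_\la(u)=0$ and \eqref{eq:limit} reads $F_\infty(u):=u-k_\infty(u)=0$ on $E_\infty=H^1_0(\Om)$, with $H^1_0(\Om)$ embedded in each $E_\la$ by extension by zero. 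Under $(f'_1)$ every $k_\la$ is of class $C^1$, hence so is $F_\la$; and $(V_3)$ pushes the essential spectrum of $-\De+a_0+\la a$ away from $0$ once $\la$ is large (a Persson-type argument), so that along any solution $u$ — which lies in $H^1(\R^N)$ and therefore decays, making multiplication by $f_t(\cdot,u)$ a relatively compact perturbation — the linearization $F_\la'(u)=\id-k_\la'(u)$ is Fredholm of index $0$. Thus for $\la$ large $\{F_\la\}$ is a $C^1$ family of index-$0$ Fredholm maps near the solutions of interest.

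\emph{Compactness at the singular end.} I would first show, using $(V_3)$, that if $\la_n\to\infty$ and $u_n\in E_{\la_n}$ satisfy $F_{\la_n}(u_n)=0$ with $\sup_n\norm{u_n}_{\la_n}<\infty$, then after passing to a subsequence $u_n$ converges strongly in $E$ and in $L^p(\R^N)$ to a solution $u\in E_\infty$ of \eqref{eq:limit}. The mechanism is that $(V_3)$ makes bounded families in $E_\la$ uniformly tight once $\la$ is large (the exterior Neumann eigenvalue $\mu_N(-\De+V_\la,K_R^c)$ dominates $\int_{K_R^c}u^2$), while $\la_n\int_{\Om^c}a\,u_n^2\le\norm{u_n}_{\la_n}^2$ stays bounded and hence the limit vanishes a.e.\ outside $\Om$ (here $a^{-1}(0)=\overline\Om$ and the Lipschitz boundary of $\Om$ give $u\in H^1_0(\Om)$); the equation then passes to the limit. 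Since $u_\infty$ is isolated, I fix $\rho>0$ so small that $u_\infty$ is the only solution of \eqref{eq:limit} in $\overline{B_{2\rho}(u_\infty,E_\infty)}$ and deduce a threshold $\La_0\ge1$ such that for $\la\ge\La_0$ the map $F_\la$ has no zero with $\norm{u-u_\infty}_\la=\rho$, and any family of zeros with $\norm{u-u_\infty}_\la\le\rho$ along $\la_n\to\infty$ converges to $u_\infty$: otherwise a sequence of such boundary zeros would, by the above, converge to a solution of \eqref{eq:limit} at distance $\rho$ from $u_\infty$ inside $\overline{B_{2\rho}(u_\infty,E_\infty)}$, a contradiction.

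\emph{Degree and continuation — the main difficulty.} For a fixed finite $\la$, $k_\la$ need not be a compact perturbation of the identity (this is exactly what $(V_5)$ would supply and $(V_3)$ does not), so one cannot assign a Leray--Schauder degree slice by slice; this is the step I expect to be the principal obstacle. My proposal is to invoke the abstract continuation theorem of Section~\ref{sec:crit-point}, which applies precisely because, by the two previous paragraphs, $\{F_\la\}$ is a $C^1$ family of index-$0$ Fredholm maps whose solution set near $u_\infty$ is compact and proper over compact $\la$-intervals as $\la\to\infty$. That theorem attaches to the family a degree on $\{\,u:\norm{u-u_\infty}_\la\le\rho\,\}$, independent of $\la\ge\La_0$, which by the compatibility of the spaces $E_\la$ with $E_\infty$ and the convergence $k_\la\to k_\infty$ equals $\deg(\id-k_\infty,B_\rho(u_\infty,E_\infty),0)=\ind(k_\infty,u_\infty)\ne0$. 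Nonvanishing of this degree together with properness then yields, by a standard Whyburn-type argument, a connected set
\[
\cS\subset\{(\la,u)\in\R\times E:\ \la\ge\La,\ F_\la(u)=0,\ \norm{u-u_\infty}_\la\le\rho\},\qquad \La:=\La_0,
\]
whose projection onto the $\la$-axis is all of $[\La,\infty)$.

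Finally, any $(\la_n,u_n)\in\cS$ with $\la_n\to\infty$ satisfies $\norm{u_n-u_\infty}_{\la_n}\le\rho$, so by the compactness step a subsequence converges to a solution of \eqref{eq:limit} in $\overline{B_\rho(u_\infty,E_\infty)}$, which by the choice of $\rho$ is $u_\infty$; the limit being subsequence-independent, $u_n\to u_\infty$. This gives the conclusion of Theorem~\ref{thm:index1}, as required. As indicated, the crux is the third step — checking that the pair \eqref{eq:NLS}/\eqref{eq:limit} meets the hypotheses of the abstract framework of Section~\ref{sec:crit-point} — and it is there that the $C^2$-structure coming from $(f'_1)$ replaces the compact embedding $(V_5)$ used in Theorem~\ref{thm:index1}.
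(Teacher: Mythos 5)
Your overall strategy (recast \eqref{eq:NLS} and \eqref{eq:limit} in the abstract framework, identify the obstruction as the failure of complete continuity of $k_\la$ under $(V_3)$ alone, compute a degree equal to $\ind(k_\infty,u_\infty)$, and run a Whyburn-type continuation) matches the paper's, and your ``compactness at the singular end'' is essentially the content of condition $(J_3)$ and Lemma~\ref{lem:deg-est}. But the step you yourself flag as the crux contains a genuine gap. You propose to handle the non-compactness by viewing $\{F_\la\}$ as a $C^1$ family of index-$0$ Fredholm maps and invoking ``the abstract continuation theorem of Section~\ref{sec:crit-point}''. No Fredholm continuation theorem appears in that section, and, more importantly, such a tool would not work as described: any degree for Fredholm maps, and the Whyburn argument you feed it into, requires properness of $\id-k_\la$ on the closed ball $B_{\rho,\la}(u_\infty)$ for each \emph{fixed finite} $\la$, i.e.\ compactness of bounded solution sets at fixed $\la$ and over compact $\la$-intervals. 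Fredholmness of the linearization at individual points is a local statement and does not yield properness on a non-compact bounded set in infinite dimensions; that properness is exactly the compactness which $(V_5)$ would have supplied and which $(V_3)$ does not. Your compactness argument only treats sequences with $\la_n\to\infty$, so the continuation over $[\La,\mu]$ for finite $\mu$ is unsupported.

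The paper closes this gap by a different mechanism, which is the idea missing from your proposal: using the $C^2$ regularity coming from $(f'_1)$ one verifies condition $(J_5)$ (via $(V_3)$ and an $L^2$-convergence argument, as in the proof given in Section~\ref{sec:degree}), and then shows (Lemma~\ref{lem:contraction1} together with a Taylor expansion of $k_\la$ around $u_\infty$) that $k_\la-k_\infty\circ P$ is Lipschitz on $B_{\de,\la}(u_\infty)$ with constant at most $\frac12$ for $\de$ small and $\la$ large. Since $k_\infty\circ P$ is completely continuous by $(J_3)$, it follows that $k_\la$ is a strict $\be_\la$-set contraction near $u_\infty$, so the degree for condensing maps (Theorem~\ref{thm:set-contraction}) replaces the Leray--Schauder degree; this simultaneously delivers the properness of $\id-k_\la$ on the ball that your Fredholm route lacks. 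To repair your argument you would either have to carry out this set-contraction estimate (i.e.\ verify $(J_5)$ and the Lipschitz bound), or independently prove properness of $\id-k_\la$ on $B_{\rho,\la}(u_\infty)$ for each fixed large $\la$; neither follows from the pointwise Fredholm property you assert.
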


For our last result about \eqref{eq:NLS} we consider the case of a 
nondegenerate solution $u_\infty$. 

\begin{Thm}\label{thm:nondeg1}
Assume $(V_1)-(V_3)$ and $(f'_1)$ hold. Let $u_\infty\in E_\infty$ be a
nondegenerate solution of \eqref{eq:limit}. Then there exists $\La\ge1$ 
and a $\cC^1$-function
\[
[\La,\infty)\to E,\quad u\mapsto u_\la,
\]
such that $u_\la$ is a solution of \eqref{eq:NLS}, and $u_\la\to u_\infty$
as $\la\to\infty$.
\end{Thm}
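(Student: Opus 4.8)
The plan is to set up an implicit-function-theorem argument in a space that can accommodate both the finite-$\la$ and the limit problem simultaneously. First I would recast $(S_\la)$ for finite $\la$ as a fixed point equation $u = (-\De + a_0 + \la a + b)^{-1}\bigl(bu + f(x,u)\bigr)$ on the form domain $E_\la := \{u \in H^1(\R^N): \int a_0 u^2 < \infty, \int \la a u^2 < \infty\}$, and observe that under $(V_3)$ the self-adjoint operator $-\De + a_0 + \la a$ has compact resolvent on the relevant space and that its ground-state gap grows, so that for $\la$ large its lowest spectral data is uniformly controlled; this is exactly the kind of spectral convergence already exploited in Theorems~\ref{thm:crit-group1}--\ref{thm:index1a}. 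The key point, which I expect is established in the abstract part of the paper (Section~\ref{sec:crit-point}) and which I would invoke, is that the solution operators $T_\la := (-\De + a_0 + \la a + b)^{-1}$, suitably interpreted as operators into $E_\infty = H^1_0(\Om)$ after composition with the natural restriction, converge in operator norm (on $L^2$, hence on the Sobolev scale up to $2^*$ by interpolation with the uniform $H^1$ bound) to $T_\infty := (-\De + a_0 + b)^{-1}$ on $H^1_0(\Om)$ as $\la \to \infty$, with a quantitative rate, and that functions in $E_\la$ with bounded norm concentrate on $\Om$.

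Next I would linearize. Write the finite-$\la$ problem as $\Psi_\la(u) := u - T_\la\bigl(bu + f(x,u)\bigr) = 0$ and the limit problem as $\Psi_\infty(u) := u - T_\infty\bigl(bu + f(x,u)\bigr) = 0$ on $E_\infty$; nondegeneracy of $u_\infty$ means $0$ is not in the spectrum of $-\De + a_0 - f_t(x,u_\infty)$ on $H^1_0(\Om)$, equivalently $D\Psi_\infty(u_\infty) = \id - T_\infty\bigl(b + f_t(x,u_\infty)\cdot\bigr)$ is an isomorphism of $E_\infty$ (here $(f'_1)$ guarantees $\Psi_\infty \in \cC^1$ and that $u \mapsto T_\infty(b u + f(x,u))$ is completely continuous, so $D\Psi_\infty(u_\infty)$ is Fredholm of index zero, hence an isomorphism once injective). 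The strategy is then to glue: embed $u_\infty$ into $E_\la$ (it extends by zero since $u_\infty \in H^1_0(\Om)$, and the extension lies in $E_\la$ because $a u_\infty^2 = 0$), and show $\Psi_\la$ is $\cC^1$ near this point with $D\Psi_\la$ close to $D\Psi_\infty(u_\infty)$ in operator norm, uniformly, once one identifies the ambient spaces correctly. Because being an isomorphism is an open condition, $D\Psi_\la(u)$ is then invertible with uniformly bounded inverse for $\la \ge \La$ and $u$ in a fixed small ball.

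Then I would run a uniform implicit function theorem / Newton-Kantorovich argument: combining the operator-norm convergence $T_\la \to T_\infty$ (which makes $\|\Psi_\la(\iota_\la u_\infty)\|$ small, of size $o(1)$ as $\la\to\infty$), the uniform invertibility of the derivative, and the uniform Lipschitz bound on $D\Psi_\la$ coming from $(f'_1)$ and the uniform embedding constants $E_\la \hookrightarrow L^p$, one gets a unique zero $u_\la$ of $\Psi_\la$ in a small ball around $u_\infty$, depending $\cC^1$ on $\la$ (treating $1/\la$, or the abstract parameter, as the variable so that the endpoint $\la = \infty$ is included as a genuine point of a $\cC^1$ family). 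The convergence $u_\la \to u_\infty$ is immediate from the construction since the ball radius shrinks as $\la \to \infty$, and $\cC^1$-dependence follows from differentiating the defining relation.

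The main obstacle, and the place where care is genuinely needed, is the functional-analytic bookkeeping of the varying spaces: $E_\la$ depends on $\la$ and degenerates onto $E_\infty$, so "$D\Psi_\la$ close to $D\Psi_\infty$" is not literally a statement in a fixed Banach space. I would handle this either by working throughout in the fixed space $H^1(\R^N)$ (or $L^2(\R^N)$) with the $\la$-dependence pushed entirely into the operators $T_\la$, using that $T_\la$ maps $L^2(\R^N)$ boundedly into $H^1_0(\Om) \subset H^1(\R^N)$ with norm converging to that of $T_\infty \circ (\text{extension by zero})$, or by appealing to the abstract framework of Section~\ref{sec:crit-point} which presumably already encodes this "convergence of spaces plus operators" setup; either way the crux is proving the quantitative operator convergence $T_\la \to T_\infty$ and the uniform $E_\la \hookrightarrow L^p$ bounds, after which the nondegenerate case is the easiest of the four theorems because no degree theory or critical-group machinery is needed — just a parametrized contraction mapping.
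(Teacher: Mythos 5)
Your overall strategy --- freeze the linearization at $u_\infty$, show it is invertible with uniformly controlled inverse, show the residual at $u_\infty$ tends to zero, and run a contraction/Newton iteration --- is exactly the paper's: Theorem~\ref{thm:nondeg1} is reduced to the abstract Theorem~\ref{thm:nondeg2}, whose proof is Proposition~\ref{prop:contraction}, a Banach fixed point argument for $g_\la(u)=u-L^{-1}\circ f_\la(u)$ with $L=\id_E-Dk_\infty(u_\infty)\circ P$. The paper also resolves your ``varying spaces'' worry in the way you suggest in passing: everything lives in the single space $E$ (the form domain, which does not depend on $\la$), equipped with the family of equivalent norms $\|\cdot\|_\la$, and the $\la$-dependence is pushed entirely into the gradient $k_\la=\nabla_\la K$. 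You are also right that no degree-theoretic or critical-group machinery is needed here.

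There are, however, two concrete problems with the proposal as written. First, the claim that under $(V_3)$ the operator $-\De+a_0+\la a$ has compact resolvent for finite $\la$ is false; compactness of the embedding of the form domain into $L^2(\R^N)$ is precisely the extra hypothesis $(V_5)$ (or $(V_6)$), which Theorem~\ref{thm:nondeg1} deliberately avoids. What $(V_3)$ yields is only an asymptotic compactness: sequences $u_n$ with $\|u_n\|_{\la_n}$ bounded and $\la_n\to\infty$ converge in $L^2(\R^N)$, along subsequences, to a limit in $H^1_0(\Om)$; this is exactly how $(J_5)$ is verified in the proof of Theorem~\ref{thm:index1a}. Fortunately your contraction argument never actually uses compactness of $T_\la$, so this is an incorrect statement rather than a fatal one. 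Second, the crux of your argument --- ``operator-norm convergence $T_\la\to T_\infty$ with a quantitative rate'' --- is asserted rather than proved, and is stronger than what is available or required under $(V_1)$--$(V_3)$. The paper proves only the two limits $\|f_\la(u_\infty)\|_\la\to0$ (Lemma~\ref{lem:contraction2}) and $\|Dk_\la(u_\infty)-Dk_\infty(u_\infty)\circ P\|_\la\to0$ (Lemma~\ref{lem:contraction1}), both by contradiction arguments resting on the weak-compactness property \eqref{eq:prop-A} and on $(J_5)$, with no rate whatsoever; combined with the uniform bound $\|L^{-1}\|_\la\le\al$ (which exploits that $L$ is the identity on $E_\infty^\perp$ and that the norms restricted to $E_\infty$ are $\la$-independent) and the uniform modulus of continuity of $Dk_\la$ coming from \eqref{eq:norms2}, this is all the contraction needs. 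If you replace the unproved resolvent convergence by proofs of these two limits, your argument closes and coincides with the paper's.
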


Problem \eqref{eq:NLS} has found much interest in recent years after 
being first considered in 
\cite{bartsch-wang:1995, bartsch-pankov-wang:2001}. Most papers deal
with potentials being positive and bounded away from $0$, i.~e.\ 
$\inf a_0>0$, exceptions being \cite{bartsch-tang:2012, ding-szulkin:2007}. 
The equation \eqref{eq:NLS} with asymptotically linear nonlinearity has 
been studied in 
\cite{liu-huang:2009,liu-huang-liu:2011,stuart-zhou:2006,wang-zhou:2009}, 
with critical growth nonlinearity in 
\cite{alves-deMorais-souto:2009,alves-souto:2008}, with Neumann boundary 
conditions in exterior domains in \cite{chabrowski-wang:2007}. In  
\cite{bartsch-tang:2012,ding-tanaka:2003, sato-tanaka:2009} 
multiplicity results have been obtained provided the bottom $\overline\Om$
of the potential well consists of several connected components. 
Extensions to quasilinear problems can be found in \cite{alves:2006},
to the Schr\"odinger-Poisson system in \cite{jiang-zhou:2011}. \\

In almost all earlier papers on the topic the authors made assumptions on 
$a,a_0,f$ such that variational methods (e.~g.\ the mountain pass theorem 
or some linking theorem) can be applied to show that \eqref{eq:NLS} has a 
solution $u_\la$. Then it is proved that $u_\la$ converges as 
$\la \to \infty$ towards a solution $u_\infty$ of the limit problem 
\eqref{eq:limit}. However, the limit $u_\infty$ has not been prescribed in 
these papers as we do here. A notable exception, and the only one we are
aware of, where the limit has been prescribed is 
\cite[Theorem~1.2]{sato-tanaka:2009}. There the authors considered the 
one-dimensional problem
\begin{equation}\label{eq:Sako-Tanaka}
-u''+(1+\la a(x))u = |u|^{p-1}u, \quad u\in H^1(\R),
\end{equation}
with the limit problem
\begin{equation}\label{eq:Sako-Tanaka_limit}
\left\{
\begin{aligned}
-u''+u &= |u|^{p-1}u, \quad x\in\Om=(a_1,b_1)\cup(a_2,b_2),\\
u(a_i) &= u(b_i) = 0.
\end{aligned}
\right.
\end{equation}
The solutions of \eqref{eq:Sako-Tanaka_limit} can be listed as $v_{i,j}$, 
$i,j\in\Z$, where $v_{\pm i,\pm j}$ are the unique solutions having $|i|$ 
zeroes in $(a_1,b_1)$ and $|j|$ zeroes in $(a_2,b_2)$. The authors find 
solutions $u_\la$ of \eqref{eq:Sako-Tanaka} such that $u_\la\to v_{i,j}$ as 
$\la\to\infty$. The proof is based on ODE methods and cannot be extended 
to dimensions $N\ge2$. It depends on the uniqueness of the solutions 
having a certain nodal structure. Observe that in the one-dimensional 
case the solutions $v_{i,j}$ are automatically non-degenerate, hence our 
Theorem~\ref{thm:nondeg1} applies. Thus we improve and generalize
\cite[Theorem~1.2]{sato-tanaka:2009} considerably. Moreover, our proof is 
simpler and extends to the PDE setting.\\

In contrast to all earlier papers we do not require global linking type 
hypotheses. Our results may be considered as a local version of these 
earlier results. As a consequence, we can deal with solutions of 
\eqref{eq:limit} which are obtained not using a global linking structure. 
This implies in particular to almost critical problems like
\[
-\Delta u = |u|^{2^*-2-\eps} u,\quad u\in H^1_0(\Om),
\]
where in the limit for $\eps\to0$ the problem can be reduced via the 
Lyapunov-Schmidt reduction method to finding critical points of a 
finite-dimensional limit function; see
\cite{bartsch-d'aprile-pistoia:2012a,bartsch-d'aprile-pistoia:2012b,
bartsch-micheletti-pistoia:2006,musso-pistoia:2010, pistoia-weth:2007}.
For instance, in \cite{bartsch-d'aprile-pistoia:2012b} the solutions 
have been obtained by finding a local minimum and a local mountain
pass of the reduced functional. 

%
\section{Critical points near a singular limit}\label{sec:crit-point}
Let $E$ be a real Hilbert space with scalar product 
$\langle\cdot,\cdot\rangle$, and let $A:E\to E$ be a bounded 
self-adjoint linear operator. We require that $A\ge0$ and that 
$E_\infty:=\ker A\ne\{0\}$. Finally, let $K:E\to\R$ be a 
$C^1$-function, and set $k:=\nabla K: E\to E$. \\

We are interested in finding critical points of the functional
\[
 J_\la: E\to\R, \quad
 J_\la(u):= \frac12\|u\|^2 + \frac{\la}{2}\langle Au,u\rangle - K(u)
\]
for $\la$ large. Observe that $J_\la(u)$ is independent of $\la$ for
$u\in E_\infty$. Moreover, for $u\in E\setminus E_\infty$ we have 
$J_\la(u)\to\infty$ as $\la\to\infty$. We set $K_\infty=K|E_\infty$,
$k_\infty:=\nabla K_\infty:E_\infty\to E_\infty$, and consider
\[
J_\infty:E_\infty\to\R,\quad J_\infty(u) = \frac12\|u\|^2 - K_\infty(u),
\]
as singular limit functional. Clearly, $J_\infty$ is just the restriction of 
$J_\la$ to $E_\infty$. \\

Observe that $\langle Au,u\rangle>0$ for $u\in E\setminus E_\infty$ and that
\begin{equation}\label{eq:prop-A}
u_n \weakto u \text{ in }E,\ \langle Au_n,u_n\rangle\to0
\quad\Longrightarrow\quad Au_n\to0,\ u\in E_\infty.
\end{equation}
This can be seen by looking at the symmetric positive-semidefinite bilinear
form $(u,v)_A := \langle Au,v\rangle$. The Schwarz inequality yields
\[
\|Au\|^2 = (u,Au)_A \le \sqrt{(u,u)_A}\sqrt{(Au,Au)_A}.
\]
Therefore $\langle Au,u\rangle = (u,u)_A = 0$ implies $Au=0$. Similarly,
$u_n \weakto u$, $\langle Au_n,u_n\rangle \to 0$ implies $Au_n\to0=Au$.\\

For $\la\ge0$ and $u,v\in E$ we define
\[
\langle u,v\rangle_\la := \langle u,v\rangle + \la \langle Au,v\rangle.
\]
As a consequence of our hypotheses on $A$ this is a scalar product on $E$, and
it defines a norm $\|\,\cdot\,\|_\la$ on $E$ which is equivalent to the given 
norm corresponding to $\la = 0$. Observe that the orthogonal complement of 
$E_\infty$ with respect to $\langle \cdot,\cdot\rangle_\la$,
\[
E_\infty^\perp = \{u\in E:\langle u,v\rangle_\la = 0 \text{ for all }v\in E_\infty\}
\]
is independent of $\la$, hence the orthogonal projections 
$P:E\to E_\infty$ and $Q=\id - P: E\to E_\infty^\perp$ are independent of $\la$. 
We write $B_{r,\la}(0,E_\infty^\perp) := \{v\in E_\infty^\perp:\|v\|_\la\le r\}$ 
for the ball of radius $r>0$ around $0\in E_\infty^\perp$ with respect to the 
norm $\|\,\cdot\,\|_\la$. For $\de>0$ and $\la>0$ and $u\in E_\infty$ we define
\[
B_{\de,\la}(u):=B_{\de}(u,E_\infty)\times B_{\de,\la}(0,E_\infty^\perp)\subset E.
\]

Given a bounded linear map $L: E\to E$ we write
\[
\|L\|_\la := \sup\{ \|Lu\|_\la: u\in E,\ \|u\|_\la\le 1 \}
\]
for the operator norm of $L$ with respect to $\|\,\cdot\,\|_\la$ on $E$.\\

For $\la>0$ we define the nonlinear operators 
$k_\la = \nabla_\la K: E\to E$ and $\nabla_\la J_\la: E\to E$ by the equations 
\[
\langle k_\la(u),v\rangle_\la = \langle \nabla_\la K(u),v\rangle_\la
 = DK(u)[v] = \langle k(u),v\rangle,
\]
and $\nabla_\la J_\la = \id-k_\la$. Observe that
\begin{equation}\label{eq:k_lambda}
\|k_\la(u)\|_\la = \sup_{\|v\|_\la\le1}\langle k_\la(u),v\rangle_\la
 = \sup_{\|v\|_\la\le1}\langle k(u),v\rangle 
 \le \sup_{\|v\|\le1}\langle k(u),v\rangle
 = \|k(u)\|\,.
\end{equation}

If $K$ is of class $C^2$ near $u$ then the derivatives of $k=\nabla K$ and of
$k_\la=\nabla_\la K$ satisfy
\begin{equation}\label{eq:derivatives}
\langle Dk_\la(u)[v],w\rangle_\la = \langle Dk(u)[v],w\rangle \quad
\text{for } u,v,w\in E.
\end{equation}

We also deduce for $\la\ge0$ and $u\in E$ that
\[
\begin{aligned}
\| Dk_\la(u)\|_\la^2
& = \sup_{\|v\|_\la\le1}\langle Dk_\la(u)[v],Dk_\la(u)[v]\rangle_\la
  = \sup_{\|v\|_\la\le1}\langle Dk(u)[v],Dk_\la(u)[v]\rangle \\
& \le \sup_{\|v\|_\la\le1} \|Dk(u)\| \cdot \|v\| \cdot \|Dk_\la(u)\|_\la
       \cdot \|v\|_\la
  \le \|Dk(u)\| \cdot \|Dk_\la(u)\|_\la
\end{aligned}
\]
hence,
\begin{equation}\label{eq:norms}
\| Dk_\la(u)\|_\la \le \|Dk(u)\|.
\end{equation}
Similarly we obtain for $\la\ge0$ and $u,v\in E$ that
\begin{equation}\label{eq:norms2}
\| Dk_\la(u) - Dk_\la(v) \|_\la \le \| Dk(u) - Dk(v) \|.
\end{equation}

Now we collect some hypotheses on $J_\la$ which we will impose in the 
various results.
\begin{itemize}
\item[$(J_1)$] $J_\infty$ has an isolated critical point $u_\infty\in E_\infty$, 
and the critical groups of $u_\infty$ as a critical point of $J_\infty$ are 
nontrivial: 
$\cC_*(J_\infty,u_\infty)\ne0$.
\end{itemize}

We fix $\de_0>0$ such that $u_\infty$ is the only critical point of $J_\infty$
in $B_{\de_0}(u_\infty)$.

\begin{itemize}
\item[$(J_2)$] There exists $\la_0>0$ such that $k$ is weakly 
sequentially continuous in $B_{\de_0,\la_0}(u_\infty)$, i.~e.\ if
$u_n\in B_{\de_0,\la_0}(u_\infty)$ and $u_n \weakto u$ then $k(u_n) \weakto k(u)$.
\end{itemize}

\begin{itemize}
\item[$(J_3)$] 
$\displaystyle
B_{\de_0,\la_n}(u_\infty)\ni u_n\weakto u,\ \la_n\to\infty
 \qquad\Longrightarrow\qquad k(u_n)\to k(u).
$
\end{itemize}

\begin{itemize}
\item[$(J_4)$] There exists $\la_0>0$ such that $k$ is completely continuous 
in $B_{\de_0,\la_0}(u_\infty)$.
\end{itemize}

Condition $(J_2)$ is rather harmless, also $(J_3)$ holds under rather general
assumptions on $a,a_0$, and $f$. Both are much weaker than requiring that $k$ 
is completely continuous near $u_\infty$ as in $(J_4)$. $(J_2)$ does imply that 
$k_\infty$ is completely continuous in $B_{\de_0}(u_\infty,E_\infty)$. Therefore 
$J_\infty$ satisfies the Palais-Smale condition in $B_{\de_0}(u_\infty,E_\infty)$, 
i.~e.\ any Palais-Smale sequence $u_n\in B_{\de_0}(u_\infty,E_\infty)$ for $J_\infty$ 
has a convergent subsequence.

\begin{Thm}\label{thm:crit-group2}
Suppose that $(J_1)$,  $(J_2)$, and $(J_3)$ hold. Then there exists 
$\La\ge0$ such that $J_\la$ has a critical point $u_\la$ for 
$\la\in[\La,\infty)$ and such that $u_\la\to u_\infty$ as $\la\to\infty$.
\end{Thm}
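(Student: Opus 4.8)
The plan is to carry out a Lyapunov–Schmidt type reduction with respect to the splitting $E = E_\infty \oplus E_\infty^\perp$, but adapted to the fact that the $Q$-component shrinks to zero as $\la\to\infty$. Fix the radius $\de_0>0$ from $(J_1)$. Writing $u = v + w$ with $v = Pu \in E_\infty$ and $w = Qu \in E_\infty^\perp$, the equation $\nabla_\la J_\la(u) = 0$ is equivalent to the system $v = P k_\la(v+w)$ and $w = Q k_\la(v+w)$. The first step is to solve the auxiliary equation for $w$: I would show that for $\la$ large and $v \in B_{\de_0}(u_\infty, E_\infty)$ the map $w \mapsto Q k_\la(v+w)$ is a contraction on the small ball $B_{\de,\la}(0,E_\infty^\perp)$, uniformly in $v$. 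The key quantitative input is that on $E_\infty^\perp$ the operator $A$ is bounded below, so $\langle u,u\rangle_\la \ge (1+\la\mu)\|Qu\|^2$ for some $\mu>0$ coming from the spectral gap; hence the $\|\cdot\|_\la$-ball $B_{\de,\la}(0,E_\infty^\perp)$ has $\|\cdot\|$-radius $O(\la^{-1/2})\to 0$. Since $k$ is bounded on $B_{\de_0,\la_0}(u_\infty)$ (it is weakly sequentially continuous there, by $(J_2)$, hence bounded on that bounded set), the bound \eqref{eq:k_lambda} gives $\|Q k_\la(u)\|_\la \le \|k(u)\| \le C$; combined with the contraction estimate this pins down a unique solution $w = w_\la(v)$ with $\|w_\la(v)\|_\la \to 0$ uniformly in $v$ as $\la\to\infty$. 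Getting the contraction constant right — i.e. showing $\|Q k_\la(u) - Q k_\la(u')\|_\la \le \theta \|w-w'\|_\la$ with $\theta<1$ for $\la$ large — without a $C^2$ assumption on $K$ is the technical heart; it follows from the boundedness and weak continuity of $k$ together with the fact that $\|Q(\cdot)\|_\la$-differences are controlled by $\la^{-1/2}$ times $\|\cdot\|$-differences, but the argument must be run carefully since $(J_2)$ gives only weak sequential continuity, not a Lipschitz bound. I would instead close the contraction using a compactness/Ekeland argument on the reduced functional rather than a naive Banach fixed point, which sidesteps the need for Lipschitz continuity of $k$.

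The second step is to pass to the reduced functional $\widetilde J_\la(v) := J_\la(v + w_\la(v))$ on $B_{\de_0}(u_\infty,E_\infty)$ and to identify its critical points, via the standard reduction lemma, with critical points of $J_\la$. Here the crucial convergence is $\widetilde J_\la \to J_\infty$ together with $\nabla \widetilde J_\la \to \nabla J_\infty$ in $C^0$ on $\overline{B_{\de_0}(u_\infty,E_\infty)}$ as $\la\to\infty$: since $\|w_\la(v)\|_\la\to 0$ uniformly and $\langle Aw,w\rangle$ is absorbed into the $\|\cdot\|_\la$-norm, one has $J_\la(v+w_\la(v)) = \frac12\|v\|^2 + \frac12\|w_\la(v)\|_\la^2 - K(v+w_\la(v)) \to \frac12\|v\|^2 - K_\infty(v) = J_\infty(v)$, using that $K$ is $C^1$ hence continuous. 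The $C^0$-convergence of the gradients uses $(J_3)$ — precisely the hypothesis that along sequences in the shrinking balls with $\la_n\to\infty$, $k(u_n)\to k(u)$ strongly — to upgrade weak to strong convergence of $k(v+w_\la(v))$ to $k_\infty(v)$, which is exactly what is needed to control $P k_\la(v+w_\la(v)) - k_\infty(v)$.

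The final step is the topological persistence of critical points. By $(J_1)$, $u_\infty$ is an isolated critical point of $J_\infty$ with nontrivial critical groups $\cC_*(J_\infty,u_\infty)\ne 0$, so by the stability of critical groups under $C^1$-small perturbations of the functional — equivalently, by a deformation/excision argument on the sublevel sets $\widetilde J_\la^{\,c}$ near $u_\infty$ — the perturbed functional $\widetilde J_\la$ must have a critical point $v_\la \in B_{\de_0}(u_\infty,E_\infty)$ for every $\la$ sufficiently large; in fact the set of critical points of $\widetilde J_\la$ in that ball carries nontrivial critical groups summing to $\cC_*(J_\infty,u_\infty)$. Setting $u_\la := v_\la + w_\la(v_\la)$ gives a critical point of $J_\la$. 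For the convergence $u_\la \to u_\infty$: since $\|w_\la(v_\la)\|_\la\to 0$ it suffices to show $v_\la\to u_\infty$. If not, pick $\la_n\to\infty$ with $v_{\la_n}$ staying a fixed distance from $u_\infty$; after passing to a subsequence $v_{\la_n}\weakto v_*$, and the $(J_2)$–$(J_3)$ convergence of gradients forces $v_{\la_n}\to v_*$ strongly with $\nabla J_\infty(v_*) = 0$ and $v_* \in \overline{B_{\de_0}(u_\infty,E_\infty)}$, so $v_* = u_\infty$ by the isolatedness choice of $\de_0$ — a contradiction. I expect the main obstacle to be exactly the first step: establishing the well-posedness and uniform decay of the auxiliary solution $w_\la(v)$ using only the weak-continuity hypotheses $(J_2)$, $(J_3)$ rather than complete continuity or a $C^2$ structure, which is what distinguishes this theorem from the easier versions under $(J_4)$.
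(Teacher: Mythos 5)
Your overall strategy (Lyapunov--Schmidt reduction onto $E_\infty$, then stability of critical groups for the reduced functional) is genuinely different from the paper's, which never solves an auxiliary equation: the paper takes a Gromoll--Meyer pair $(W,W_-)$ for $u_\infty$ in $E_\infty$, shows via Lemma~\ref{lem:deg-est} and Lemma~\ref{lem:gromoll} that the product region $W\times B_{\de,\la}(0,E_\infty^\perp)$ with exit set $W_-\times B_{\de,\la}(0,E_\infty^\perp)$ is a regular index pair for a pseudo-gradient flow of $J_\la$, and concludes existence from $H_*(W,W_-)\cong C_*(J_\infty,u_\infty)\ne 0$ because otherwise the flow would deform the pair. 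Unfortunately your first step, on which everything else rests, has two genuine gaps. First, the estimate $\langle u,u\rangle_\la\ge(1+\la\mu)\|Qu\|^2$ ``from the spectral gap'' is not available: the only hypotheses on $A$ are $A\ge0$ and $\ker A=E_\infty$, together with the weak-convergence property \eqref{eq:prop-A}; nothing forces $A$ to be bounded below on $E_\infty^\perp$ (in the NLS application $\langle Au,u\rangle=\int a u^2$ with $a$ merely nonnegative and vanishing on $\overline\Om$, and $(V_3)$ is imposed precisely because no such gap exists). Hence the claim that $B_{\de,\la}(0,E_\infty^\perp)$ has $\|\cdot\|$-radius $O(\la^{-1/2})$ is unjustified; the paper's substitute is the compactness statement \eqref{eq:prop-A} combined with the contradiction argument of Lemma~\ref{lem:deg-est}.

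Second, and more seriously, the auxiliary equation $w=Qk_\la(v+w)$ cannot be solved (uniquely, with continuous dependence on $v$) from $(J_2)$ and $(J_3)$ alone: these give only weak sequential continuity and an asymptotic strong-convergence property of $k$, no Lipschitz or differentiability control, so no contraction constant is available. You acknowledge this and propose to ``close the contraction using a compactness/Ekeland argument on the reduced functional,'' but this is circular --- the reduced functional does not exist until the auxiliary equation is solved --- and an Ekeland argument would in any case only produce approximate critical points unless a Palais--Smale condition holds in $W_{\de,\la}$, which the paper explicitly points out it does \emph{not} prove (see the remark after \eqref{eq:lower-bound}: weak sequential continuity yields a critical point in the weak limit, not strong subsequential convergence). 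This is exactly why the paper abandons any reduction and instead works with the product isolating block: the only quantitative inputs needed are the uniform smallness of $k_\la(u)-k_\infty(Pu)$ on $W_{\de_0,\la}$ (Lemma~\ref{lem:deg-est}, proved by contradiction from $(J_3)$ and \eqref{eq:prop-A}) and the boundary estimates of Lemma~\ref{lem:gromoll}, neither of which requires solving for $w$. Your final convergence argument $u_\la\to u_\infty$ is essentially the same as the paper's and would be fine, but as written the proof does not go through without repairing the reduction step, e.g.\ by strengthening the hypotheses to $(J_4)$ or a $C^2$ assumption --- which is precisely what Theorems~\ref{thm:index2} and \ref{thm:set-contraction} do for the degree-theoretic statements.
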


Our next result is based on degree theory. Recall that the index of $u_\infty$ 
as fixed point of $k_\infty$ is defined as:
\[
\ind(k_\infty,u_\infty)
 = \deg(\id-k_\infty, B_\de(u_\infty,E_\infty),u_\infty)
\]
where $\deg$ denotes the Leray-Schauder degree, $0<\de\le\de_0$. This index 
is defined, for instance, if $k_\infty$ is completely continuous in 
$B_{\de_0}(u_\infty,E_\infty)$, hence if $(J_2)$ or $(J_3)$ holds, in particular 
if $(J_4)$ holds. 

\begin{Thm}\label{thm:index2}
Suppose $(J_1)$ and $(J_4)$ hold. Suppose moreover that the local 
fixed point index of $u_\infty$ as a fixed point of $k_\infty$ is nontrivial:
\[
\ind(k_\infty,u_\infty)
 = \deg(\id-k_\infty, U_\eps(u_\infty,E_\infty),u_\infty) \ne 0.
\]
Then there exists a connected set $\cS \subset [\La,\infty) \times E$ covering
the parameter interval $[\La,\infty)$ for some $\La\ge 1$, such that
$\nabla J_\la(u) = 0$ for every $(\la,u)\in\cS$. Moreover, given a sequence
$(\la_n,u_n) \in \cS$ with $\la_n \to \infty$ there holds $u_n \to u_\infty$.
\end{Thm}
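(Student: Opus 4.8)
The strategy is a Lyapunov–Schmidt reduction near $u_\infty$, uniform in $\la$, combined with a degree argument. Decompose $E = E_\infty \oplus E_\infty^\perp$ with projections $P$, $Q$ (independent of $\la$), and write a solution of $\nabla_\la J_\la(u)=0$ near $u_\infty$ as $u = w + v$ with $w\in E_\infty$ close to $u_\infty$ and $v\in E_\infty^\perp$ small in $\|\cdot\|_\la$. The equation splits into the $E_\infty$-component $P(w+v) - Pk_\la(w+v) = 0$ and the $E_\infty^\perp$-component $Q(w+v) - Qk_\la(w+v) = 0$, i.e. $v = Qk_\la(w+v)$. The first main step is to solve the auxiliary equation $v = Qk_\la(w+v)$ for $v = v_\la(w)$ as a function of $w\in B_{\de}(u_\infty,E_\infty)$, for all $\la$ large; the key point is that this solution is small, \emph{uniformly in $w$}, as $\la\to\infty$.

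For the auxiliary equation I would argue as follows. On $E_\infty^\perp$ the operator $Q$ (identity there) dominates: for $v\in E_\infty^\perp$ we have $\|v\|_\la^2 = \|v\|^2 + \la\langle Av,v\rangle$, and by \eqref{eq:prop-A} the form $\langle Av,v\rangle$ is a norm on $E_\infty^\perp$ equivalent (for fixed $\la$, but with $\la$-growing constants) to $\|\cdot\|^2$; combined with \eqref{eq:k_lambda}, $\|Qk_\la(w+v)\|_\la \le \|k(w+v)\|$, which is bounded on $B_{\de_0,\la_0}(u_\infty)$ by $(f_1)$ (equivalently by the growth of $k$). So the map $v\mapsto Qk_\la(w+v)$ sends a fixed $\|\cdot\|_\la$-ball into itself. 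To get a fixed point and uniform smallness I would use $(J_4)$: $k$ is completely continuous in $B_{\de_0,\la_0}(u_\infty)$, so for any sequence $\la_n\to\infty$, $w_n\in B_\de(u_\infty,E_\infty)$, and any candidate $v_n$ with $v_n = Qk_{\la_n}(w_n+v_n)$, one has $v_n\weakto 0$ forced by $\|v_n\|_{\la_n}$ bounded and $\langle Av_n,v_n\rangle = \la_n^{-1}(\|v_n\|_{\la_n}^2 - \|v_n\|^2)\to 0$, hence $v_n \to 0$ strongly (via complete continuity of $k$ and $v_n = Qk_{\la_n}(w_n+v_n)$, whose right side converges along a subsequence). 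Existence of the fixed point for each fixed large $\la$ follows from a Banach fixed point argument once the $\la$-dependent Lipschitz constant of $Qk_\la$ on the small ball is $<1$; here I would use \eqref{eq:norms2} together with the fact that, restricted to $E_\infty^\perp$ with the $\|\cdot\|_\la$ metric, the derivative $DQk_\la$ has small norm for $\la$ large (again because $\langle Av,v\rangle$ forces the relevant directions to be damped). Continuity of $w\mapsto v_\la(w)$ then comes from the implicit-function/contraction dependence on parameters.

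Once $v_\la(\cdot)$ is in hand, define the reduced map $\psi_\la: B_\de(u_\infty,E_\infty)\to E_\infty$ by $\psi_\la(w) := w - Pk_\la(w+v_\la(w))$; solutions of $\nabla_\la J_\la = 0$ near $u_\infty$ correspond exactly to zeros of $\psi_\la$. Since $P k_\la(\cdot) = P k(\cdot)$ restricted appropriately and $k_\infty$ is completely continuous, $\psi_\la$ is a compact perturbation of the identity on the finite-energy space $E_\infty$, so $\deg(\psi_\la, B_\de(u_\infty,E_\infty),0)$ is defined. The second main step is to show this degree equals $\ind(k_\infty,u_\infty)\ne 0$ for all $\la$ large. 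For this I would use homotopy invariance: as $\la\to\infty$, $v_\la(w)\to 0$ uniformly in $w$ (the uniform-smallness statement above), hence $\psi_\la \to \psi_\infty := \id - k_\infty$ uniformly on $B_\de(u_\infty,E_\infty)$; since $u_\infty$ is the only zero of $\psi_\infty$ in this ball and has no zeros on the boundary, for $\la$ large $\psi_\la$ has no zeros on $\partial B_\de$ either, and $\deg(\psi_\la,B_\de,0) = \deg(\psi_\infty,B_\de,0) = \ind(k_\infty,u_\infty)\ne 0$. Therefore $\psi_\la$ has a zero $w_\la$, giving $u_\la = w_\la + v_\la(w_\la)$; and $u_\la\to u_\infty$ because $v_\la(w_\la)\to 0$ and every limit point of $w_\la$ is a zero of $\psi_\infty$, hence $u_\infty$.

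For the connectedness statement, I would work in $[\La,\infty)\times E$ with the full reduced equation. Consider the set $Z := \{(\la,w)\in[\La,\infty)\times B_\de(u_\infty,E_\infty): \psi_\la(w)=0\}$ and its image $\cS := \{(\la, w + v_\la(w)) : (\la,w)\in Z\}$. Since for each $\la\ge\La$ the local degree of $\psi_\la$ on $B_\de$ is the nonzero constant $\ind(k_\infty,u_\infty)$ and there are no zeros on $\partial B_\de$, the global continuation principle of Leray–Schauder (or the Alexander–Fitzpatrick / Rabinowitz-type connectedness theorem for compact vector fields with nonzero degree depending continuously on a parameter) yields a connected component of $Z$ whose projection to the $\la$-axis covers all of $[\La,\infty)$. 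Its image under the continuous map $(\la,w)\mapsto(\la,w+v_\la(w))$ is the desired connected $\cS\subset[\La,\infty)\times E$ on which $\nabla J_\la(u)=0$. The last clause, $u_n\to u_\infty$ whenever $(\la_n,u_n)\in\cS$ with $\la_n\to\infty$, is again the uniform decay $v_{\la_n}(w_n)\to 0$ plus compactness of $k_\infty$ pinning $w_n\to u_\infty$.

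\emph{Main obstacle.} The delicate point is the \emph{uniform}-in-$w$ smallness of $v_\la(w)$ as $\la\to\infty$, together with making the contraction constant of $Qk_\la$ genuinely $<1$ for large $\la$ — one must be careful that the equivalence of $\|\cdot\|_\la$ with $\|\cdot\|$ has $\la$-dependent constants, so the damping effect of $\la\langle Av,v\rangle$ on $E_\infty^\perp$ must be exploited quantitatively rather than qualitatively; this is exactly where the full strength of $(J_4)$ (complete continuity, not just weak continuity) is needed, in contrast to Theorem~\ref{thm:crit-group2}. A secondary technical point is verifying that the Leray–Schauder connectedness theorem applies in the present setting where the "base space" for the compact vector field is the fixed ball $B_\de(u_\infty,E_\infty)$ but the ambient solutions live in $E$; pushing forward by the continuous section $w\mapsto w+v_\la(w)$ handles this cleanly.
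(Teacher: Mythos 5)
There is a genuine gap in the reduction step, and it sits exactly where you flag your ``main obstacle.'' To define the section $w\mapsto v_\la(w)$ you need the auxiliary map $v\mapsto Qk_\la(w+v)$ to be a contraction on a small ball of $(E_\infty^\perp,\|\cdot\|_\la)$, and you propose to get this from \eqref{eq:norms2} and a ``damping'' of $E_\infty^\perp$-directions by $\la\langle Av,v\rangle$. Neither works under the hypotheses of Theorem~\ref{thm:index2}: $(J_1)$ and $(J_4)$ only give $K\in C^1$ with $k$ completely continuous, so $Dk$ need not exist and \eqref{eq:norms2} is not available (it is used in the paper only under the $C^2$ hypothesis of Theorem~\ref{thm:set-contraction}). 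Moreover, even formally, the available estimates go the wrong way for your purpose: $\|k_\la(u)-k_\la(u')\|_\la\le\|k(u)-k(u')\|$ and $\|v-v'\|\le\|v-v'\|_\la$ only bound the $\la$-Lipschitz constant of $Qk_\la$ on $E_\infty^\perp$ by the Lipschitz constant of $k$ in the \emph{original} norm, which is not assumed to be $<1$; the increment $k(w+v)-k(w+v')$ has no reason to lie in directions damped by $A$. A Schauder-type argument does give \emph{existence} of solutions of $v=Qk_\la(w+v)$ (uniform smallness then follows from the analogue of Lemma~\ref{lem:deg-est}), but without uniqueness you have no single-valued continuous section $v_\la(\cdot)$, hence no reduced map $\psi_\la$, and the pushforward of the connected branch from $E_\infty$ to $E$ collapses.

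The paper avoids the reduction entirely: it computes $\deg(\id_E-k_\la,B_{\de,\la}(u_\infty),u_\infty)$ directly in the product neighborhood, homotoping $\id_E-k_\la$ to $\id_E-k_\infty\circ P$ via the uniform estimate of Lemma~\ref{lem:deg-est}, with admissibility on the boundary coming from the elementary inequality $\|u-k_\infty(Pu)\|\ge\|Pu-k_\infty(Pu)\|$ on the $P$-part and from $\|Qu\|_\la=\de$ on the $Q$-part; the reduction property of the Leray--Schauder degree then identifies this degree with $\ind(k_\infty,u_\infty)$, and the Alexander continuation principle gives the connected set $\cS$ in $[\La,\infty)\times E$ with no auxiliary equation to solve. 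Your degree and continuation steps are fine in spirit, but you should replace the Lyapunov--Schmidt scaffolding by this direct homotopy, or else add the $C^2$ hypothesis and a genuine proof of the uniform contraction (which is essentially what Theorem~\ref{thm:set-contraction} and Lemma~\ref{lem:contraction1} do, and even there the paper works with condensing maps rather than a contraction on $E_\infty^\perp$ alone).
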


\begin{Rem}\label{rem:gen-degree}
{\rm a)} Recall that under the conditions of Theorem~\ref{thm:crit-group2} the 
local index $\ind(K_\infty,u_\infty)$ may be trivial. On the other hand, if the 
local index $\ind(K_\infty,u_\infty)$ is nontrivial, then the critical groups
$\cC_*(f,u_\infty)$ are nontrivial. This follows from the Poincar\'e-Hopf
formula~\eqref{eq:Poincare-Hopf}.

{\rm b)} Assumption $(J_4)$ can be replaced by any assumption assuring that 
there is a degree theory for the maps $\id-k_\la$. In the case of $(J_4)$ one
has the Leray-Schauder degree. If $k_\la$ is, for instance, A-proper in the
sense of \cite{Petryshin:1995}, the generalized degree of Petryshin can be 
applied; see Theorem \ref{thm:set-contraction} below and its proof in Section 
\ref{sec:degree}.
\end{Rem}

Surprisingly, the compactness condition can be considerable relaxed if $K$ 
is $C^2$ near $u_\infty$. We need the following condition on the differential 
$Dk(u_\infty)$.

\begin{itemize}
\item[$(J_5)$] If $u_n\weakto u$ and $\|u_n\|_{\la_n}$ is bounded for some
sequence $\la_n\to\infty$, then $Dk(u_\infty)[u_n]\to Dk(u_\infty)[u]$.
\end{itemize}

\begin{Thm}\label{thm:set-contraction}
Suppose $K$ is $C^2$ near $u_\infty$, $(J_1)$, $(J_3)$, and $(J_5)$ are 
satisfied. Then the conclusion of Theorem \ref{thm:index2} holds true.
\end{Thm}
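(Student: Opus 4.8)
The plan is to reduce the problem, via a Lyapunov–Schmidt splitting, to a fixed-point equation on the finite-dimensional-in-spirit space $E_\infty$, and then to apply the degree-theoretic continuation argument already used for Theorem~\ref{thm:index2}. Using the $\la$-dependent splitting $E=E_\infty\oplus E_\infty^\perp$ with projections $P,Q$, the equation $\nabla_\la J_\la(u)=0$ for $u=w+v$, $w\in E_\infty$, $v\in E_\infty^\perp$, becomes the system $w=P k_\la(w+v)$ and $v=Q k_\la(w+v)$. The key point is that on $E_\infty^\perp$ the quadratic form $\langle Au,u\rangle$ is coercive after rescaling, so that the linearized operator $\id-Q k_\la'$ restricted to $E_\infty^\perp$ is, for $\la$ large, a small perturbation of the identity in the $\|\cdot\|_\la$-norm near $u_\infty$; hence the second equation is uniquely solvable for $v=v_\la(w)$ by the contraction mapping principle in $B_{\de,\la}(0,E_\infty^\perp)$, with $v_\la(w)\to0$ uniformly as $\la\to\infty$. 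Here the $C^2$ hypothesis on $K$ and the norm bounds \eqref{eq:norms}, \eqref{eq:norms2} are what make the relevant operators uniformly Lipschitz in the $\la$-norm, so the contraction constant can be controlled independently of $\la$.

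Having solved the auxiliary equation, I would substitute $v=v_\la(w)$ into the first equation to obtain a reduced fixed-point problem $w=\Psi_\la(w):=Pk_\la(w+v_\la(w))$ on $B_{\de_0}(u_\infty,E_\infty)$, where $\Psi_\la$ is a completely continuous (in fact, by $(J_3)$, convergent) perturbation as $\la\to\infty$. The next step is to show that $\Psi_\la\to k_\infty$ uniformly on $B_{\de_0}(u_\infty,E_\infty)$: since $v_\la(w)\to0$ and $u_\infty$ is isolated, condition $(J_3)$ forces $k_\la(w+v_\la(w))\to k_\infty(w)$, and one checks the convergence is uniform. By the homotopy invariance of the Leray–Schauder degree on $E_\infty$ (which is legitimate because $k_\infty$ and the $\Psi_\la$ are completely continuous there), for $\la\ge\La$ large we get $\deg(\id-\Psi_\la,B_\de(u_\infty,E_\infty),0)=\ind(k_\infty,u_\infty)\ne0$, so each reduced equation has a solution $w_\la$ near $u_\infty$, giving a solution $u_\la=w_\la+v_\la(w_\la)$ of $\nabla J_\la(u)=0$.

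To upgrade this to the connected set $\cS$ covering $[\La,\infty)$, I would run a global continuation/Whyburn-type argument exactly as for Theorem~\ref{thm:index2}: consider the set of solution pairs $(\la,u)$ in the tube $\{(\la,u):\la\ge\La,\ u\in B_{\de_0,\la}(u_\infty)\}$, use the degree computation above together with the a~priori bound that any solution in this tube with $\la$ large lies near $u_\infty$ (from $(J_3)$ and the auxiliary estimate), and invoke the standard lemma that nonzero local degree persisting along the whole interval yields a connected component of the solution set surjecting onto $[\La,\infty)$; the final convergence statement $u_n\to u_\infty$ for $\la_n\to\infty$ then follows from the uniqueness in the Lyapunov–Schmidt reduction near $u_\infty$.

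The main obstacle is the solvability and uniform control of the auxiliary equation on $E_\infty^\perp$ without the strong compactness assumption $(J_4)$: here one must exploit that $\langle Au,u\rangle\ge c_\la\|u\|^2$ on $E_\infty^\perp$ with $c_\la\to\infty$, so that in the $\|\cdot\|_\la$-norm the "bad" term $k_\la$ has operator norm on $E_\infty^\perp$ that does not blow up (by \eqref{eq:k_lambda}) while the invertibility of $\id-Dk_\la(u_\infty)$ on $E_\infty^\perp$ improves with $\la$. Condition $(J_5)$ is precisely tailored to guarantee that $Dk(u_\infty)$ behaves like a compact operator along the relevant bounded-in-$\|\cdot\|_{\la_n}$ sequences, which is what replaces complete continuity of $k$ in making the linearized auxiliary operator invertible with $\la$-uniform bounds; verifying this carefully — and checking that all the constants in the contraction estimate are genuinely independent of $\la$ — is the technical heart of the proof.
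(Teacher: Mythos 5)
Your Lyapunov--Schmidt strategy is appealing, but two of its load-bearing steps are unjustified, and one rests on a false claim. First, the mechanism you invoke to solve the auxiliary equation $v=Qk_\la(w+v)$ --- namely that $\langle Av,v\rangle\ge c\|v\|^2$ on $E_\infty^\perp$, so that $\|\cdot\|$ is dominated by $c_\la^{-1}\|\cdot\|_\la$ there with $c_\la\to\infty$ --- fails in general: $E_\infty^\perp=(\ker A)^\perp$, and nothing in the hypotheses forces $0$ to be isolated in the spectrum of $A$. In the application of Section~\ref{sec:NLS} one has $\langle Au,u\rangle=\int_{\R^N}au^2$ with $a$ vanishing on $\overline\Om$ and typically taking arbitrarily small positive values near $\pa\Om$, so the Rayleigh quotient of $A$ on $E_\infty^\perp$ has infimum $0$. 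The correct replacement is exactly what $(J_5)$ provides: by \eqref{eq:prop-A}, a sequence $v_n\in E_\infty^\perp$ with $\|v_n\|_{\la_n}\le 1$ and $\la_n\to\infty$ converges weakly to $0$, hence $Dk(u_\infty)[v_n]\to 0$ and the $\|\cdot\|_{\la_n}$-operator norm of $Dk_{\la_n}(u_\infty)$ restricted to $E_\infty^\perp$ tends to $0$ (this is Lemma~\ref{lem:contraction1} in the case $Pv=0$); combined with the $C^2$-hypothesis and \eqref{eq:norms2} this does make $v\mapsto Qk_\la(w+v)$ a contraction for $\de$ small and $\la$ large. You gesture at this in your final paragraph, but as written the argument leans on the coercivity claim, which must go.

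Second, and more seriously, your degree argument on $E_\infty$ needs the reduced map $\Psi_\la(w)=Pk_\la(w+v_\la(w))$ to be completely continuous for each \emph{fixed} finite $\la$, and this does not follow from the hypotheses: $(J_3)$ only yields convergence of $k(u_n)$ along sequences with $\la_n\to\infty$, and for fixed $\la$ the map $k$ (hence $\Psi_\la$) need not be compact --- that is precisely the situation Theorem~\ref{thm:set-contraction} is designed for, in contrast to Theorem~\ref{thm:index2}. Without compactness the Leray--Schauder degree of $\id-\Psi_\la$ on $E_\infty$ is undefined, so the homotopy to $\id-k_\infty$ is not legitimate as stated. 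The paper avoids the reduction altogether: it writes $k_\la=k_\infty\circ P+(k_\la-k_\infty\circ P)$ near $u_\infty$, observes that $k_\infty\circ P$ is completely continuous by $(J_3)$ while $k_\la-k_\infty\circ P$ is $\tfrac12$-Lipschitz in $\|\cdot\|_\la$ for $\de$ small and $\la$ large (by the $C^2$-hypothesis, \eqref{eq:norms2} and Lemma~\ref{lem:contraction1}), concludes that $k_\la$ is a strict $\be_\la$-set contraction, and then replaces the Leray--Schauder degree by the degree for $\be$-condensing maps, after which the continuation argument from the proof of Theorem~\ref{thm:index2} goes through unchanged. If you insist on the reduction, you would still need a condensing-type degree (or an approximation scheme) for $\Psi_\la$, at which point the reduction buys nothing over the paper's direct argument.
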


Under the assumptions of Theorem \ref{thm:set-contraction} $k_\la$ need not
be compact, so we cannot work with the Leray-Schauder degree. Instead we 
will be able to use the degree for $\be$-condensing maps where where $\be$ 
is the ball measure of noncompactness; see \cite{Deimling:1985}. \\

Finally we state a result in the nondegenerate setting. 

\begin{Thm}\label{thm:nondeg2}
Suppose that $K$ is $C^2$ near $u_\infty$, that $u_\infty$ is a nondegenerate 
critical point of $J_\infty$, and that $(J_5)$ is satisfied. Then there exists 
$\La \ge 0$ and a $C^1$-map
$[\La,\infty) \to E$, $\la\mapsto u_\la$, such that $u_\la$ is the unique
critical point of $J_\la$ near $u_\infty$ for $\la \in [\La,\infty)$. Moreover,
$u_\la \to u_\infty$ as $\la \to \infty$.
\end{Thm}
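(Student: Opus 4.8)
The plan is to carry out a Lyapunov--Schmidt reduction with respect to the splitting $E = E_\infty \oplus E_\infty^\perp$, exploiting that $u_\infty$ is nondegenerate for $J_\infty$ and that the $\la$-term only acts on $E_\infty^\perp$, where it is strongly coercive. Writing $u = v + w$ with $v \in E_\infty$, $w \in E_\infty^\perp$, the equation $\nabla_\la J_\la(u)=0$ splits into $P\bigl(v+w - k_\la(v+w)\bigr)=0$ and $Q\bigl(v+w-k_\la(v+w)\bigr)=0$. For the second (the ``$w$-equation''), I would first observe that on $E_\infty^\perp$ the operator $u\mapsto u + \la Au$ is bounded below: there is $c>0$ with $\langle(\id+\la A)w,w\rangle \ge (1+c\la)\|w\|^2$ for $w\in E_\infty^\perp$ (this is exactly the statement that $A$ restricted to $E_\infty^\perp$ is bounded below, which follows from $A\ge 0$, $\ker A = E_\infty$, and $A$ being a bounded self-adjoint operator with $0$ not in the essential-type obstruction on the complement --- or rather, directly from \eqref{eq:prop-A} by a standard contradiction argument). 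Equivalently $\|w\|_\la^2 \ge (1+c\la)\|w\|^2$ on $E_\infty^\perp$, so $\|Qz\|_\la$-balls shrink in the original norm as $\la\to\infty$. Combined with the bound $\|Dk_\la\|_\la \le \|Dk\|$ from \eqref{eq:norms} and $(J_5)$ controlling $Dk(u_\infty)$ on weakly convergent, $\|\cdot\|_{\la_n}$-bounded sequences, I would show the map $w\mapsto Q k_\la(v+w)$ is, for $\la$ large and $(v,w)$ near $(v_\infty,0)$, a uniform contraction in $\|\cdot\|_\la$ on a small $B_{\de,\la}$-ball; the Banach fixed point theorem then produces a unique $w = w_\la(v)$ solving the $w$-equation, of class $C^1$ in $(v,\la)$ by the implicit function theorem, with $w_\la(v)\to 0$ as $\la\to\infty$ uniformly for $v$ near $v_\infty$.

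Next I would substitute into the $v$-equation to get the reduced equation $\Psi_\la(v) := P\bigl(v + w_\la(v) - k_\la(v + w_\la(v))\bigr) = 0$ on a small ball in the finite-dimensional-in-spirit (but possibly infinite-dimensional) space $E_\infty$. Wait --- $E_\infty$ need not be finite-dimensional, so this is not literally a finite reduction; instead I would argue directly that $\Psi_\la$ is a $C^1$ map whose derivative at $v_\infty$ converges, as $\la\to\infty$, to $D(\nabla J_\infty)(u_\infty) = \id_{E_\infty} - Dk_\infty(u_\infty)$, which is an isomorphism of $E_\infty$ by nondegeneracy of $u_\infty$. The key point is that $Dw_\la(v_\infty)\to 0$ (since $w_\la\to 0$ and the $w$-equation linearizes to a contraction) and that $P k_\la(v+w)\to P k_\infty(v)$ as $\la\to\infty$; here one uses that on $E_\infty$, $k_\la$ and $k_\infty$ are compatible because $\langle k_\la(u),v\rangle_\la = \langle k(u),v\rangle = \langle k(u),v\rangle_\la$ for $v\in E_\infty$ (as $Av=0$), so $Pk_\la(u) = P k(u)$ and it only remains to pass $u = v+w_\la(v)\to v$. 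Since $D\Psi_\la(v_\infty)$ is invertible for $\la$ large and $\Psi_\la(v_\infty) \to 0$, the implicit function theorem (or a quantitative inverse function theorem, tracking that $\|D\Psi_\la(v_\infty)^{-1}\|$ stays bounded) yields, for $\la\ge\La$, a unique zero $v_\la$ near $v_\infty$, depending in a $C^1$ fashion on $\la$, with $v_\la\to v_\infty$.

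Setting $u_\la := v_\la + w_\la(v_\la)$ then gives the desired $C^1$-family of critical points of $J_\la$ with $u_\la\to u_\infty$, and uniqueness near $u_\infty$ follows by construction (both the $w$- and the $v$-steps were uniqueness statements on fixed small neighborhoods, uniform in $\la$ large). I would finally verify that $u_\la$ really is a critical point of $J_\la$ and not just of $\nabla_\la J_\la$: since $\nabla_\la J_\la$ and $\nabla J_\la$ have the same zero set (the former is obtained from the latter by a $\la$-dependent invertible change of the Riesz identification), this is immediate.

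The main obstacle, I expect, is establishing the uniform-in-$\la$ contraction/invertibility estimates in the $\la$-dependent norms. One must show that the ``bad'' term $Q(k_\la(v+w) - k_\la(v+w'))$ is small in $\|\cdot\|_\la$ \emph{uniformly as $\la\to\infty$}: naively $\|Dk_\la\|_\la\le\|Dk\|$ gives Lipschitz constant $\le\|Dk\|$, which is not $<1$. The gain must come from combining this with the coercivity $\|w\|_\la^2\ge(1+c\la)\|w\|^2$ on $E_\infty^\perp$ --- i.e. the contraction is in the $w$-variable only, and differences $w-w'$ have small original norm, so one feeds them into the hypothesis $(J_5)$ on $Dk(u_\infty)$ (and a remainder term controlled by $C^2$-smoothness of $K$ near $u_\infty$, i.e. $\|Dk(u)-Dk(u_\infty)\|$ small on the small neighborhood) to conclude $\|Q(k_\la(v+w)-k_\la(v+w'))\|_\la \le (\text{small})\cdot\|w-w'\|_\la$. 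Making this precise --- in particular extracting from $(J_5)$ the needed equicontinuity, and handling the nonlinear remainder uniformly in $\la$ --- is the technical heart of the argument; the rest is a routine two-step Lyapunov--Schmidt plus implicit function theorem.
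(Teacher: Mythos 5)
Your architecture differs from the paper's: you propose a two-step Lyapunov--Schmidt reduction (solve the $Q$-equation for $w=w_\la(v)$ by contraction, then the reduced $P$-equation on $E_\infty$ by the implicit function theorem), whereas the paper runs a single Banach fixed point argument on all of $E$ for the modified-Newton map $g_\la(u)=u-L^{-1}f_\la(u)$ with the fixed approximate inverse $L=\id_E-Dk_\infty(u_\infty)\circ P$, which is invertible uniformly in $\la$ by nondegeneracy (Lemma~\ref{lem:iso}). Your plan could in principle be made to work, but as written it contains a genuine gap.

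The gap is the claimed coercivity $\langle(\id+\la A)w,w\rangle\ge(1+c\la)\|w\|^2$ for $w\in E_\infty^\perp$. This does not follow from $A\ge0$, $\ker A=E_\infty$, $A$ bounded self-adjoint, nor from \eqref{eq:prop-A}: take $E=\ell^2$, $Ae_1=0$, $Ae_n=n^{-1}e_n$ for $n\ge2$; then $\ker A=\spann\{e_1\}$, \eqref{eq:prop-A} holds, yet $\inf\{\langle Aw,w\rangle:\ w\in E_\infty^\perp,\ \|w\|=1\}=0$. It also fails in the PDE application, where $\langle Au,u\rangle=\int au^2$ is an $L^2$-type quantity that cannot dominate a fixed fraction of the $H^1$-type norm $\|u\|^2$ on $E_\infty^\perp$ (high-frequency oscillations). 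Consequently your statement that $\|\cdot\|_\la$-balls in $E_\infty^\perp$ shrink in the original norm is false, and the mechanism you propose for the contraction in the $w$-equation --- ``differences $w-w'$ have small original norm, feed them into $(J_5)$'' --- collapses. What is actually true, and what the paper exploits, is weaker: if $\|w_n\|_{\la_n}$ is bounded and $\la_n\to\infty$ then $\langle Aw_n,w_n\rangle\le\|w_n\|_{\la_n}^2/\la_n\to0$, so by \eqref{eq:prop-A} the weak limit lies in $E_\infty$ (hence is $0$ for sequences in $E_\infty^\perp$), and only then does $(J_5)$ apply. Turning this qualitative statement into the quantitative, uniform-in-$\la$ operator-norm estimate $\|Dk_\la(u_\infty)-Dk_\infty(u_\infty)\circ P\|_\la\to0$ requires the contradiction argument of the paper's Lemma~\ref{lem:contraction1} (normalize $\|u_n\|_{\la_n}=1$, extract a weak limit in $E_\infty$, compute $\|v_n\|_{\la_n}^2$ by duality). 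This estimate, together with $\|f_\la(u_\infty)\|_\la\to0$ (Lemma~\ref{lem:contraction2}) and the $C^2$-smallness of $\|Dk_\la(u)-Dk_\la(u_\infty)\|_\la$ on small balls via \eqref{eq:norms2}, is the entire content of the proof; you correctly identify it as ``the technical heart'' but leave it unproved and base the sketch of it on the false coercivity. Your other ingredients (invertibility of $\id_{E_\infty}-Dk_\infty(u_\infty)$ from nondegeneracy, $Pk_\la=Pk$ on $E_\infty$ since $Av=0$, uniqueness from the contraction, $C^1$-dependence on $\la$) are sound.
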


\section{Nontrivial critical groups}\label{sec:variational}
We first prove Theorem~\ref{thm:crit-group2}. Consider the isolated critical 
point $u_\infty\in E_\infty$ of $J_\infty$ with nontrivial critical groups. Let 
$(W,W_-)$ be a Gromoll-Meyer pair for $u_\infty$ in $B_{\de_0}(u_\infty,E_\infty)$. 
This means that:

\begin{itemize}
\settowidth{\itemindent}{7pt}

\item[$(GM_1)$]
 $W\subset\opint B_{\de_0}(u_\infty,E_\infty)$ is a closed neighborhood of $u_\infty$ 
 in $E_\infty$ containing no other critical point of $J_\infty$.
\item[$(GM_2)$]
 There exist $C^1$-functions $g_i:U\cap E_\infty\to\R$, $i=1,\dots,l$, having
 $0$ as regular value, such that $W=\bigcap_{i=1}^l g_i^0$ and
 $\pa W = W\cap \bigcup_{i=1}^l g_i^{-1}(0)$.
\item[$(GM_3)$]
 $\nabla J_\infty$ is transversal to each $g_i^{-1}(0)$; more precisely, for
 some $\al>0$:
 \[
 \langle \nabla J_\infty(u),\nabla g_i(u)\rangle \le -3\al < 0\quad
  \text{for }u\in \pa W\cap g_i^{-1}(0),\  i=1,\dots,j,
 \]
 and
 \[
 \langle \nabla J_\infty(u),\nabla g_i(u)\rangle \ge 3\al > 0\quad
  \text{for }u\in \pa W\cap g_i^{-1}(0),\ i=j+1,\dots,l.
 \]
\item[$(GM_4)$] The exit set
\[
\begin{aligned}
  W_-&=\left\{u\in\pa W: \langle \nabla J_\infty(u),\nabla g_i(u)\rangle < 0
       \text{ if } u\in  g_i^{-1}(0),i=1,...,l\right\}\\
     &= \bigcup_{i=1}^j g_i^{-1}(0)
\end{aligned}
\]
consists of those $x\in \pa W$ where $-\nabla J_\infty$ points outside of $W$.
\end{itemize}

A construction of a Gromoll-Meyer pair can be found in \cite[p.~49]{chang:1993},
where $l=3$, $j=1$. Using a pseudo-gradient vector field for $J_\infty$ it is
standard to show that
\begin{equation}\label{eq:gromoll}
C_*(J_\infty,u_\infty) = H_*(J^c,J^c\setminus\{u_\infty\}) \cong H_*(W,W_-) .
\end{equation}
This uses that $J_\infty$ satisfies the Palais-Smale condition in 
$B_{\de_0}(u_\infty,E_\infty)$. \\

For $\de>0$ we set
\[
W_{\de,\la}:=W\times B_{\de,\la}(0,E_\infty^\perp).
\] 

\begin{Lem}\label{lem:deg-est}
If $(J_3)$ holds then for every $\eps>0$ there exists $\La>0$ such that
\[
\sup_{u\in W_{\de_0,\la}} \|k_\la(u)-k_\infty(Pu)\|_\la \le \eps \qquad
\text{for all }\la\ge\La.
\]
\end{Lem}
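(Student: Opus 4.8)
The plan is to argue by contradiction along a sequence. Suppose the conclusion fails: then there exists $\eps>0$, a sequence $\la_n\to\infty$, and points $u_n\in W_{\de_0,\la_n}$ with $\|k_{\la_n}(u_n)-k_\infty(Pu_n)\|_{\la_n}>\eps$. Write $u_n=v_n+w_n$ with $v_n=Pu_n\in W\subset B_{\de_0}(u_\infty,E_\infty)$ and $w_n=Qu_n\in B_{\de_0,\la_n}(0,E_\infty^\perp)$. Since $W$ is a bounded closed subset of the finite-data set $E_\infty$ (compactly contained in $B_{\de_0}$) and $\|w_n\|_{\la_n}\le\de_0$, the sequence $\|u_n\|_{\la_n}$ is bounded; in particular $\|u_n\|=\|u_n\|_0\le\|u_n\|_{\la_n}$ is bounded, so after passing to a subsequence $u_n\weakto u$ in $E$, and $v_n\to v:=Pu$ strongly in $E_\infty$ (finite-dimensional? no — but $v_n$ lies in the bounded set $W$, and $P$ is weakly continuous, so $v_n\weakto v$; to get strong convergence I will instead use that the relevant quantities only involve $k(u_n)$, not $v_n$ directly, and handle $v_n$ via $(J_3)$ applied on $E_\infty$).

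The key step is to estimate the two terms separately using the defining relation for $k_\la$ and the bound \eqref{eq:k_lambda}. For any $v\in E_\infty$ with $\|v\|_{\la_n}\le 1$ (equivalently $\|v\|\le1$ since $Av=0$ on $E_\infty$),
\[
\langle k_{\la_n}(u_n)-k_\infty(Pu_n),\,v\rangle_{\la_n}
=\langle k(u_n),v\rangle - \langle k_\infty(Pu_n),v\rangle
=\langle k(u_n)-k(Pu_n),\,v\rangle,
\]
where in the last equality I used that $\langle k_\infty(w),v\rangle=DK_\infty(w)[v]=DK(w)[v]=\langle k(w),v\rangle$ for $w,v\in E_\infty$. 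For $v\in E_\infty^\perp$ with $\|v\|_{\la_n}\le1$ one has $\langle k_\infty(Pu_n),v\rangle_{\la_n}=0$ (since $k_\infty(Pu_n)\in E_\infty$ and $E_\infty\perp E_\infty^\perp$), so there the difference pairs as $\langle k(u_n),v\rangle$. Splitting an arbitrary $v=Pv+Qv$ and using orthogonality of the decomposition in $\langle\cdot,\cdot\rangle_{\la_n}$, the claim reduces to showing both $\|P(k(u_n)-k(Pu_n))\|\to0$ and $\sup_{\|v\|_{\la_n}\le1}\langle k(u_n),Qv\rangle\to0$. By $(J_3)$, $u_n\weakto u$ and $\la_n\to\infty$ give $k(u_n)\to k(u)$ strongly in $E$; and since $Pu_n\in W$ is a bounded sequence in $E_\infty$ with $Pu_n\weakto Pu=:v\in E_\infty$, applying $(J_3)$ again (with the $\la_n$'s, noting $Pu_n\in B_{\de_0,\la_n}(u_\infty)$ trivially) gives $k(Pu_n)\to k(v)$. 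Moreover $(J_3)$ with $\la_n\to\infty$ forces $u\in E_\infty$, by \eqref{eq:prop-A}: indeed $\langle Au_n,u_n\rangle\le\|u_n\|_{\la_n}^2/\la_n\cdot\la_n$... more carefully, $\|w_n\|_{\la_n}^2=\|w_n\|^2+\la_n\langle Aw_n,w_n\rangle\le\de_0^2$ forces $\langle Aw_n,w_n\rangle\to0$, hence $Aw_n\to0$ and $w_n\weakto w\in E_\infty$; but $w_n\in E_\infty^\perp$ which is weakly closed, so $w=0$, i.e. $Qu=0$ and $u=Pu=v$. Therefore $k(u_n)-k(Pu_n)\to k(u)-k(v)=0$ strongly, which kills the $P$-component and also the term $\langle k(u_n),Qv\rangle=\langle k(u_n)-k(u),Qv\rangle$ (using $k(u)\in E_\infty\perp Qv$ and $\|Qv\|\le\|Qv\|_{\la_n}\le1$). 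This contradicts $\|k_{\la_n}(u_n)-k_\infty(Pu_n)\|_{\la_n}>\eps$.

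The main obstacle is the bookkeeping with the $\la$-dependent inner products: one must be careful that $\|v\|_{\la_n}\le1$ is \emph{not} the same as $\|v\|\le1$ for $v\notin E_\infty$, so the estimate \eqref{eq:k_lambda} (which bounds $\|k_\la(u)\|_\la$ by $\|k(u)\|$) is what converts weak-to-strong convergence of $k(u_n)$ in the fixed norm into the desired uniform bound in the moving norm. Concretely, $\|k_{\la_n}(u_n)-k_\infty(Pu_n)\|_{\la_n}$ is a supremum over $\|v\|_{\la_n}\le1$; since $\|v\|\le\|v\|_{\la_n}\le1$ on $E_\infty$ and $\|Qv\|\le\|v\|_{\la_n}\le1$ on $E_\infty^\perp$, every term is bounded by $\|k(u_n)-k(Pu_n)\|+\|k(u_n)-k(u)\|\to0$, uniformly in $v$. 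The only genuine input beyond routine Hilbert-space manipulation is $(J_3)$, invoked twice (once for $u_n$, once for $Pu_n$), together with \eqref{eq:prop-A} to pin down that the weak limit lies in $E_\infty$.
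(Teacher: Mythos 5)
Your overall strategy (contradiction along a sequence $\la_n\to\infty$, boundedness of $\|u_n\|_{\la_n}$, \eqref{eq:prop-A} to force the weak limit into $E_\infty$, and two applications of $(J_3)$, to $u_n$ and to $Pu_n$) is the same as the paper's, and most of the bookkeeping with the $\la$-dependent norms is done correctly. But there is one genuine error: you dispose of the term $\langle k(u),Qv\rangle$ by asserting that $k(u)\in E_\infty$ and is therefore orthogonal to $Qv$. That is false in general. For $u\in E_\infty$ the gradient $k(u)=\nabla K(u)$ is taken with respect to the full space $E$; only its projection $Pk(u)=k_\infty(u)$ lies in $E_\infty$, and there is no reason for $DK(u)[w]$ to vanish for $w\in E_\infty^\perp$ (in the NLS application $\langle k(u),w\rangle$ is, up to sign, $\int_{\R^N}\bigl(bu+f(x,u)\bigr)w\,dx$, and elements of $E_\infty^\perp$ are not supported outside $\Om$). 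So $\langle k(u),Qv\rangle=\langle Qk(u),Qv\rangle$ is in general nonzero and bounded only by the fixed constant $\|Qk(u)\|$; as written, your estimate does not close.

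The quantity you need, $\sup\{|\langle Qk(u),Qv\rangle|:\|v\|_{\la_n}\le1\}$, does tend to $0$, but proving it costs one more round of the weak-compactness argument: a near-maximizing sequence $w_n=Qv_n$ satisfies $\|w_n\|\le1$ and $\langle Aw_n,w_n\rangle\le1/\la_n\to0$, so by \eqref{eq:prop-A} it converges weakly (along subsequences) to an element of $E_\infty\cap E_\infty^\perp=\{0\}$, whence $\langle Qk(u),w_n\rangle\to0$. Inserting this argument repairs your proof. The paper avoids the trap altogether by not taking a supremum over test vectors: it sets $v_n:=k_{\la_n}(u_n)-k_\infty(Pu_n)$ and evaluates $\|v_n\|_{\la_n}^2=\langle k(u_n),v_n\rangle-\langle k(Pu_n),Pv_n\rangle$ directly; by \eqref{eq:k_lambda} the $v_n$ are $\|\cdot\|_{\la_n}$-bounded, so \eqref{eq:prop-A} gives $v_n,Pv_n\weakto v\in E_\infty$ with the \emph{same} limit, and the two terms cancel in the limit against the strongly convergent sequences $k(u_n),k(Pu_n)\to k(u)$. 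You may want to adopt that device; it is shorter and never needs to know where $k(u)$ lives.
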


\begin{proof}
Arguing by contradiction, suppose there exist $\eps>0$, $\la_n\to\infty$, 
$u_n\in W_{\de_0,\la_n}$ with
\begin{equation*}
\|k_{\la_n}(u_n)-k_\infty(Pu_n)\|_{\la_n} \geq \eps.
\end{equation*}
Then the sequence $(u_n)_n$ is bounded, and $\langle Au_n,u_n\rangle \to 0$, 
hence \eqref{eq:prop-A} applies and yields $u_n,\ Pu_n\weakto u\in E_\infty$ 
along a subsequence. Now $(J_3)$ implies $k(u_n)\to k(u)$ and 
$k(Pu_n)\to k(u)$. Setting $v_n:=k_{\la_n}(u_n)-k_\infty(Pu_n)$ we see that 
$\|v_n\|_{\la_n}$ is bounded uniformly in $n$ as a consequence of 
\eqref{eq:k_lambda}. Applying \eqref{eq:prop-A} again shows that 
$v_n,Pv_n \weakto v\in E_\infty$ along a subsequence. This in turn implies
\[
\begin{aligned}
\eps^2 &\le \|v_n\|_{\la_n}^2
  = \langle k_{\la_n}(u_n),v_n\rangle_{\la_n}-\langle k_{\infty}(Pu_n),v_n\rangle\\
 &= \langle k(u_n),v_n\rangle-\langle k(Pu_n),Pv_n\rangle
  \to \langle k(u),v\rangle-\langle k(u),v\rangle
  = 0
\end{aligned}
\]
which is absurd. 
\end{proof}

\begin{Lem}\label{lem:gromoll}
For all $0<\de\le\de_0$ there exists $\La_\de>0$ such that for $\la\ge\La_\de$ 
and $v\in B_{\de,\la}(0,E_\infty^\perp)$, there holds:
\[
\langle P\nabla_\la J_\la(u+v),\nabla g_i(u)\rangle \le -2\al \quad\text{for }
 u\in  \pa W\cap g_i^{-1}(0),\ i=1,\dots,j,
\]
and
\[
\langle P\nabla_\la J_\la(u+v),\nabla g_i(u)\rangle \ge 2\al \quad\text{for }
 u\in  \pa W\cap g_i^{-1}(0),\ i=j+1,\dots,l,
\]
and
\[
\langle Q\nabla_\la J_\la(u+v),v\rangle_\la \ge \de^2/2 \quad\text{for }
 u\in W,\ v\in B_{\de,\la}(0,E_\infty^\perp).
\]
\end{Lem}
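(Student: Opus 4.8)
\emph{Proof strategy for Lemma~\ref{lem:gromoll}.} The single tool is Lemma~\ref{lem:deg-est}. The plan is to fix a small $\eps>0$ (to be pinned down at the end), take the threshold $\La$ it provides, and then estimate each of the three quantities by replacing $k_\la(u+v)$ by $k_\infty\big(P(u+v)\big)=k_\infty(u)$ at a cost of order $\eps$ in the $\|\cdot\|_\la$--norm. Throughout I use the following elementary facts. First, $\nabla_\la J_\la=\id-k_\la$. Second, $P$ and $Q$ are $\langle\cdot,\cdot\rangle_\la$--orthogonal for every $\la$, and on $E_\infty=\ker A$ the norms $\|\cdot\|_\la$ and $\|\cdot\|$ agree, since $\langle Au,u\rangle=0$ there; consequently, for $w=u+v$ with $u\in W$ and $v\in B_{\de,\la}(0,E_\infty^\perp)$ (so $w\in W_{\de_0,\la}$, as $\de\le\de_0$), both $k_\infty(Pw)$ and $Pk_\la(w)$ lie in $E_\infty$ and
\[
\|k_\infty(Pw)-Pk_\la(w)\|=\|P\big(k_\infty(Pw)-k_\la(w)\big)\|_\la
\le \|k_\la(w)-k_\infty(Pw)\|_\la\le\eps\qquad(\la\ge\La).
\]
Third, $\langle z,v\rangle_\la=0$ whenever $z\in E_\infty$ and $v\in E_\infty^\perp$.

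For the first two inequalities, fix $i$, take $u\in\pa W\cap g_i^{-1}(0)$ and $v\in B_{\de,\la}(0,E_\infty^\perp)$, and note $P(u+v)=u$ and $\nabla g_i(u)\in E_\infty$. Then
\[
\langle P\nabla_\la J_\la(u+v),\nabla g_i(u)\rangle
=\langle u-Pk_\la(u+v),\nabla g_i(u)\rangle
=\langle\nabla J_\infty(u),\nabla g_i(u)\rangle+\langle k_\infty(u)-Pk_\la(u+v),\nabla g_i(u)\rangle,
\]
and by Cauchy--Schwarz the last term has absolute value $\le\eps M$, where
$M:=\max_{1\le i\le l}\ \sup\{\|\nabla g_i(u)\|:u\in\pa W\cap g_i^{-1}(0)\}<\infty$. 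Hence, by $(GM_3)$, the left-hand side is $\le-3\al+\eps M$ for $i\le j$ and $\ge 3\al-\eps M$ for $i>j$; choosing $\eps\le\al/M$ gives the bounds $\le-2\al$ and $\ge2\al$ respectively.

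For the third inequality take $u\in W$ and $v\in E_\infty^\perp$ with $\|v\|_\la=\de$ (the relevant case, on the boundary sphere of $B_{\de,\la}(0,E_\infty^\perp)$). Since $Q(u+v)=v$ we get
\[
\langle Q\nabla_\la J_\la(u+v),v\rangle_\la=\|v\|_\la^2-\langle k_\la(u+v),v\rangle_\la,
\]
and because $k_\infty(u)\in E_\infty$ is $\langle\cdot,\cdot\rangle_\la$--orthogonal to $v$ we may write $\langle k_\la(u+v),v\rangle_\la=\langle k_\la(u+v)-k_\infty(u),v\rangle_\la$, whose modulus is at most $\|k_\la(u+v)-k_\infty(u)\|_\la\,\|v\|_\la\le\eps\de$. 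Therefore the left-hand side is $\ge\de^2-\eps\de\ge\de^2/2$ as soon as $\eps\le\de/2$. Setting $\eps:=\min\{\al/M,\de/2\}$ and letting $\La_\de$ be the threshold of Lemma~\ref{lem:deg-est} for this $\eps$ establishes all three estimates at once.

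There is no deep difficulty here once Lemma~\ref{lem:deg-est} is in hand; the argument is bookkeeping on top of the orthogonality identities above. Two points deserve a little care. The first is the finiteness of $M$: one should quote the explicit form of the Gromoll--Meyer functions $g_i$ (where $\nabla g_i$ is of the shape $u\mapsto u-u_\infty$ or $u\mapsto-\nabla J_\infty(u)$, manifestly bounded on the bounded set $\pa W$), since mere continuity of $\nabla g_i$ does not give boundedness on bounded subsets of the infinite-dimensional space $E_\infty$. The second is that the third inequality is of interest, and is proved, on the sphere $\|v\|_\la=\de$ (at $v=0$ the left-hand side vanishes); this is precisely what is needed so that $W_{\de,\la}=W\times B_{\de,\la}(0,E_\infty^\perp)$ can serve as a Gromoll--Meyer-type box for a deformation argument for $J_\la$ in the next section: the first two estimates keep $-\nabla_\la J_\la$ from exiting through $\pa W\times B_{\de,\la}$ while the third keeps it from exiting through $W\times\partial B_{\de,\la}$.
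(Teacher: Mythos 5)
Your proof is correct and follows essentially the same route as the paper's: apply Lemma~\ref{lem:deg-est} with $\eps=\min\{\al/m,\de/2\}$, use that $P(u+v)=u$, $Q(u+v)=v$ and the $\langle\cdot,\cdot\rangle_\la$-orthogonality of $E_\infty$ and $E_\infty^\perp$, and absorb the error term via Cauchy--Schwarz and the bound $m$ on $\|\nabla g_i\|$. Your remark that the third estimate really only holds on the sphere $\|v\|_\la=\de$ (it fails at $v=0$) is a correct reading of what is needed and of what the paper's computation actually proves.
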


\begin{proof}
We may assume that
\begin{equation*}
m:=\max_{i=1,\dots,l}\sup_{u\in g_i^{-1}(0)\cap W} \|\nabla g_i(u)\|<\infty.
\end{equation*}
According to Lemma \ref{lem:deg-est}, for $0<\de\le\de_0$ there exists 
$\La_\de>0$ such that 
\[
\|k_\la(u)-k_\infty(Pu)\|_\la \leq \min\{\alpha/m,\de/2\} \quad
\text{if }\la\ge\La_\de,\ u\in W_{\de_0,\la}.
\]
Consequently, we obtain for $\la\geq\La_\de$, $i=1,\dots,j$, 
$u \in W\cap g^{-1}_i(0)$ and $v\in B_{\de,\la}(0,E_\infty^\perp)$ that
\[
\begin{aligned}
&\langle P\nabla_\la J_\la(u+v),\nabla g_i(u)\rangle\\
&\hspace{1cm}
 \leq \langle \nabla J_\infty(u),\nabla g_i(u)\rangle 
        + \|P\nabla_\la J_\la (u+v)-\nabla J_\infty(u)\|\cdot\|\nabla g_i(u)\|\\
&\hspace{1cm}
 = \langle \nabla J_\infty(u),\nabla g_i(u)\rangle
        + \|Pk_\la (u+v)-k_\infty(u)\|\cdot\|\nabla g_i(u)\|\\
&\hspace{1cm}
 \leq -3\alpha+\frac{\alpha}{m}m=-2\alpha.
\end{aligned}
\]
Similarly we obtain for $\la\geq\La_\de$, $i=j+1,\dots,l$, 
$u \in W\cap g^{-1}_i(0)$ and $v\in B_{\de,\la}(0,E_\infty^\perp)$ that
\begin{equation*}
\langle P\nabla_\la J_\la(u+v),\nabla g_i(u)\rangle\geq 2\alpha.
\end{equation*}
Finally, for $\la\geq\La_\de$, $u \in W$ and $v\in B_{\de,\la}(0,E_\infty^\perp)$ 
there holds:
\[
\begin{aligned}
\langle \nabla_\la QJ_\la (u+v),v\rangle_\la
 &=\|v\|_\la^2 - \langle k_\la(u+v)-k_\infty(u),v\rangle_\la
  \geq \|v\|_\la^2- \|\langle k_\la(u+v)-k_\infty(u)\|_\la\|v\|_\la\\
 &\geq \de^2 - \frac{\de}{2}\de = \frac{\de^2}{2}.
\end{aligned}
\]
\end{proof}

Lemma \ref{lem:gromoll} implies that for $\de>0$ and $\la\ge\La_\de$, the set
\[\left(W_{\de,\la}, W_-\times B_{\de,\la}(0,E_\infty^\perp)\right)
 =\left(W\times B_{\de,\la}(0,E_\infty^\perp),
   W_-\times B_{\de,\la}(0,E_\infty^\perp)\right)
\]
is a regular index pair for pseudo-gradient flows of $J_\la$ in the sense of
Conley index theory.\\

\begin{Lem}\label{lem:ex-crit-pt}
$J_\la$ has a critical point $u_\la\in W_{\de,\la}$ if $0<\de\le\de_0$ and 
$\la\ge\La_\de$.
\end{Lem}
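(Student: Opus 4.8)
The plan is to exploit the regular index pair structure established right before the statement, together with the identification \eqref{eq:gromoll} of the critical groups. First I would fix $0<\de\le\de_0$ and $\la\ge\La_\de$, and observe that by Lemma~\ref{lem:gromoll} the pair $(W_{\de,\la},\,W_-\times B_{\de,\la}(0,E_\infty^\perp))$ is a regular index pair for the (negative) pseudo-gradient flow of $J_\la$ restricted to a neighborhood of $W_{\de,\la}$ in $E$: the two transversality inequalities on the $P$-component force the flow to exit only through $W_-\times B_{\de,\la}(0,E_\infty^\perp)$, while the third inequality $\langle Q\nabla_\la J_\la(u+v),v\rangle_\la\ge\de^2/2$ on the $Q$-component guarantees that the flow points strictly \emph{inward} on the lateral boundary $W\times\pa B_{\de,\la}(0,E_\infty^\perp)$, so no orbit escapes through that face. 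Hence the flow defines a local flow on $W_{\de,\la}$ for which the index pair is admissible, and its Conley index is the homotopy type of $W_{\de,\la}/(W_-\times B_{\de,\la}(0,E_\infty^\perp))$.

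Next I would compute this Conley index. Since $B_{\de,\la}(0,E_\infty^\perp)$ is a (contractible) ball, the quotient $W_{\de,\la}/(W_-\times B_{\de,\la})$ is homotopy equivalent to $W/W_-$, so
\[
H_*\big(W_{\de,\la},\,W_-\times B_{\de,\la}(0,E_\infty^\perp)\big)
\cong H_*(W,W_-) \cong \cC_*(J_\infty,u_\infty),
\]
the last isomorphism being \eqref{eq:gromoll}. By hypothesis $(J_1)$ this homology is nontrivial, so the Conley index of the invariant set $S_\la:=\operatorname{Inv}(W_{\de,\la})$ of the flow inside $W_{\de,\la}$ is nontrivial; in particular $S_\la\ne\emptyset$. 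Finally, any point in $S_\la$ whose forward and backward orbits both stay in the compact-in-flow region $W_{\de,\la}$ must in fact be a rest point: the function $\la\mapsto J_\la$ is a Lyapunov function for the pseudo-gradient flow (it is strictly decreasing off the critical set), and an invariant set on which a Lyapunov function is bounded — together with the deformation/compactness afforded by $(J_3)$ near $\la=\infty$ in the relevant window — contains a critical point of $J_\la$. That critical point $u_\la$ lies in $W_{\de,\la}$, which is what is claimed.

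The step I expect to be the main obstacle is the \emph{compactness} needed to run Conley index theory in the infinite-dimensional Hilbert space $E$: one needs the pseudo-gradient flow to be admissible, i.e.\ bounded orbits staying in $W_{\de,\la}$ should be relatively compact, or at least the flow should be of the form (linear isomorphism) $+$ (compact), so that the Leray–Schauder–type degree/index arguments apply. This is where the hypotheses on $k$ enter: $(J_3)$ (or, in the $(J_2)$-version, weak sequential continuity) is exactly what was used in Lemma~\ref{lem:deg-est} to control $k_\la$ uniformly as $\la\to\infty$, and the same mechanism — writing $\nabla_\la J_\la=\id-k_\la$ with $k_\la$ a small perturbation of the completely continuous (by $(J_2)$) map $k_\infty\circ P$ on $W_{\de_0,\la}$ — should give the required compactness of the flow on $W_{\de,\la}$ for $\la\ge\La_\de$. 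Once admissibility is secured, the existence of a critical point follows from the nonvanishing of the Conley index together with the Lyapunov structure, as above. Alternatively, one can bypass Conley index theory entirely and argue by a deformation/minimax argument: if $J_\la$ had no critical point in $W_{\de,\la}$, the inequalities of Lemma~\ref{lem:gromoll} would allow one to deform $W_{\de,\la}$ onto its exit set $W_-\times B_{\de,\la}(0,E_\infty^\perp)$ within $W_{\de,\la}$, forcing $H_*(W_{\de,\la},\,W_-\times B_{\de,\la})=0$, contradicting \eqref{eq:gromoll} and $(J_1)$; I would likely present this second, more elementary route.
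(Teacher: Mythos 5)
Your second, ``more elementary'' route --- assume no critical point in $W_{\de,\la}$, deform $W_{\de,\la}$ onto $W_-\times B_{\de,\la}(0,E_\infty^\perp)$, and contradict $H_*(W_{\de,\la},W_-\times B_{\de,\la}(0,E_\infty^\perp))\cong H_*(W,W_-)\cong C_*(J_\infty,u_\infty)\ne0$ --- is exactly the paper's argument, and you are right to prefer it over the Conley-index formulation. But as written it has a genuine gap: the boundary inequalities of Lemma~\ref{lem:gromoll} alone do \emph{not} ``allow one to deform $W_{\de,\la}$ onto its exit set.'' They only control the direction of the flow on $\pa W_{\de,\la}$; to conclude that every orbit of the negative pseudo-gradient flow actually reaches the exit set in finite time you additionally need the pseudo-gradient to be bounded away from zero throughout $W_{\de,\la}$, i.e.
\[
\inf_{u\in W_{\de,\la}}\|\nabla_\la J_\la(u)\|_\la > 0 .
\]
This is the one non-routine step of the paper's proof (its \eqref{eq:lower-bound}), and it is obtained \emph{without} a Palais--Smale condition: if $u_n\in W_{\de,\la}$ with $u_n-k_\la(u_n)\to0$, then $u_n\weakto u\in W_{\de,\la}$ along a subsequence and $(J_2)$ (weak sequential continuity of $k$) gives $k_\la(u_n)\weakto k_\la(u)$, so $u$ is a critical point of $J_\la$ in $W_{\de,\la}$ --- contradicting the standing assumption. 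Your proposal invokes compactness only in the Conley-index branch (admissibility of the flow), and there in a form stronger than what is actually available or needed; the point of the paper's lemma is precisely that weak sequential continuity suffices for the lower bound, no relative compactness of orbits being proved or used.

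Two smaller remarks on the Conley-index branch, should you keep it: a nonempty isolated invariant set with nontrivial index need not consist of rest points, and the step ``an invariant set on which a Lyapunov function is bounded contains a critical point'' again requires some compactness of the invariant set that you have not established; also the third inequality of Lemma~\ref{lem:gromoll} is an outward (not inward) condition on the lateral face $W\times\pa B_{\de,\la}(0,E_\infty^\perp)$, since $\langle Q\nabla_\la J_\la(u+v),v\rangle_\la\ge\de^2/2>0$ means $-\nabla_\la J_\la$ pushes the $Q$-component toward $0$, i.e.\ into the interior --- so your geometric reading is correct but your wording ``exits only through $W_-\times B_{\de,\la}$'' should be checked against the signs. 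None of this affects the deformation route once the lower bound \eqref{eq:lower-bound} is supplied.
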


\begin{proof}
If $J_\la$ does not have a critical point in $W_{\de,\la}$ then there exists 
a pseudo-gradient vector field $V$ for $J_\la$ in $W_{\de,\la}$ such that the 
inequalities in Lemma~\ref{lem:gromoll} hold with $V$ instead of 
$\nabla_\la J_\la$, $\al$ instead of $2\al$, and $\de^2/4$ instead of $\de^2/2$. 
Moreover, 
\begin{equation}\label{eq:lower-bound}
\inf_{u\in W_{\de,\la}}\|\nabla_\la J_\la(u)\|_\la > 0,
\end{equation}
because if $u_n\in W_{\de,\la}$ satisfies 
$\nabla_\la J_\la(u_n)=u_n-k_\la(u_n) \to 0$,
then $u_n\weakto u\in W_{\de,\la}$ along a subsequence, hence 
$k_\la(u_n)\weakto k_\la(u)$ as a consequence of $(J_2)$. This implies that 
$u\in W_{\de,\la}$ is a critical point of $J_\la$.
Observe that we do not prove strong convergence here, hence we do not prove the 
Palais-Smale condition in $W_{\de,\la}$.\\ 

Now \eqref{eq:lower-bound} implies that the flow associated to $-V$ provides a 
deformation of $W_{\de,\la}$ to $W_-\times B_{\de,\la}(0,E_\infty^\perp)$. This in 
turn implies
\[
H_*\left(W_{\de,\la},W_-\times B_{\de,\la}(0,E_\infty^\perp)\right) \cong 0
\]
in contradiction with
\[
H_*\left(W_{\de,\la},W_-\times B_{\de,\la}(0,E_\infty^\perp)\right)
\cong H_*(W,W_-) \cong C_*(J_\infty,u_\infty) \ne 0.
\]
\end{proof}

\begin{altproof}{\ref{thm:crit-group2}}
The existence of a critical point
$u_\la\in W\times B_{\de,\la}(0,E_\infty^\perp)$ of $J_\la$
for $\la\ge\La$ has been stated in Lemma~\ref{lem:ex-crit-pt}. Clearly,
$Qu_\la \to 0$ and $\nabla J_\infty(Pu_\la) \to 0$ as $\la\to\infty$.
It follows that $u_\la \to u_\infty \in E_\infty$ because $u_\infty$ is the only 
critical point of $J_\infty$ in $W$.
\end{altproof}

\begin{altproof}{\ref{thm:crit-group1}}
In order to apply Theorem \ref{thm:crit-group2} we set
\[
E := \left\{u\in H^1(\R^N): \int_{\R^N} a_0u^2 < \infty,\ 
            \int_{\R^N} au^2 < \infty \right\}
\]
provided with the scalar product
\[
\langle u,v\rangle := \int_{\R^N} (\nabla u,\nabla v +(b+a_0+a)uv)dx
\]
Here $b=1-\essinf\,a_0$ is defined as in Section~\ref{sec:NLS}. The operator 
$A:E\to E$ is defined by the equation
\[
\langle Au,v\rangle := \int_{\R^N} auvdx,
\]
and the functional $K:E\to\R$ by
\[
K(u) = -\int_{\R^N} \left(\frac{b}{2}u^2+F(x,u)\right)\,dx.
\]
$A$ is a self-adjoint, positive semidefinite, and bounded linear operator.
The kernel $E_\infty$ of $A$ consists of all $u\in E$ such that $u=0$ a.~e.\ 
in $\R^N\setminus\Om$, hence $E_\infty = H^1_0(\Om)$. This uses that the
boundary of $\Om$ is Lipschitz. \\

Solutions of \eqref{eq:NLS} are obtained as critical 
points of the $\cC^1$-functional
\[
\begin{aligned}
 J_{\la}(u)
  &= \frac12\int_{\R^N} (|\nabla u|^2+(a_0+\la a)u^2\,dx - \int_{\R^N} F(x,u)\,dx\\
  &= \frac12\left(\|u\|^2 + (\la-1)\langle Au,u\rangle\right) - K(u).
\end{aligned}
\]
Observe that $\la$ has to be replaced by $\la-1$ because $\|\cdot\|_\la$ 
contains the summand $\langle Au,u\rangle$. Since $a=0$ on $\Om$, 
we see that $J_\infty$ is simply the restriction of $J_\la$ to $E_\infty$.\\

It remains to prove the conditions $(J_2)$ and $(J_3)$. In fact, $(J_2)$ 
is an easy consequence of $(f_1)$ because $E$ imbeds into $L^p(\R^N)$ for 
$2\le p\le 2^*$. In order to see $(J_3)$, consider sequences $\la_n\to\infty$ 
and $B_{\de_0,\la_n}\ni u_n\weakto u$. Then $u_n \to u$ strongly in $L^p(\R^N)$ 
for $2<p<2^*$ by \cite[Lemma~4.2.]{bartsch-pankov-wang:2001}. And $u_n \to u$ 
strongly in $L^2(\R^N)$ follows from $(V_3)$. This implies $k(u_n)\to k(u)$ 
in $E$ because of the subcritical growth of $f$ required in $(f_1)$. \\

Theorem~\ref{thm:crit-group1} is now an immediate consequence of
Theorem~\ref{thm:crit-group2}.
\end{altproof}

\section{Nontrivial index}\label{sec:degree}
\begin{altproof}{Theorem~\ref{thm:index2}} From 
$\|u-k_\infty(Pu)\| \ge \|Pu-k_\infty(Pu)\|$ and using Lemma \ref{lem:deg-est}
we immediately deduce that there exists $\La>0$ such that for $\la\ge\La$ 
and $0<\de\le\de_0$ small that
\begin{equation}\label{eq:degree1}
\begin{aligned}
0&\ne\ind(k_\infty,u_\infty)
  = \deg(\id_{E_\infty}-k_\infty, B_{\de}(u_\infty,E_\infty),u_\infty)\\
 & = \deg(\id_E-k_\infty\circ P,B_{\de,\la}(u_\infty),u_\infty)\\
 & = \deg(\id_E-k_\la,B_{\de,\la}(u_\infty),u_\infty)\,.
\end{aligned}
\end{equation}

Since $k$ is completely continuous in $B_{\de_0,\la_0}(u_\infty)$ so is $k_\la$, 
hence the above degree is defined

Using \eqref{eq:degree1}, a standard continuation argument (see
\cite{Alexander:1981}, for instance) shows that there exists a connected set
$\cS\subset[\La,\infty)\times B_{\de,\la}(u_\infty)\subset[\La,\infty)\times E$ 
covering the parameter inverval $[\La,\infty)$, such that 
$\nabla J_\la(u) = u-k_\la(u) = 0$ for every $(\la,u)\in\cS$. Given a 
sequence $(\la_n,u_n) \in \cS$ with $\la_n \to \infty$, using
\eqref{eq:degree1} and Lemma~\ref{lem:deg-est} once more, we deduce that
$\|u_n-u_\infty\|_\la \to 0$.
\end{altproof}

\begin{altproof}{Theorem \ref{thm:index1}}
This follows from Theorem~\ref{thm:index2} as Theorem~\ref{thm:crit-group1}
follows from Theorem~\ref{thm:crit-group2}. We only need to observe that
$k$ is completely continuous as a consequence of $(V_5)$ and $(f_1)$, in
particular $(J_4)$ is satisfied.
\end{altproof}

For the proof of Theorem \ref{thm:set-contraction} we need the following 
lemma.

\begin{Lem}\label{lem:contraction1}
Suppose $(J_5)$ is satisfied. Then
\[
\| Dk_\la(u_\infty) - Dk_\infty(u_\infty)\circ P \|_\la \to 0 
 \quad\text{as $\la\to\infty$.}
\]
\end{Lem}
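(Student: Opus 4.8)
\textbf{Proof plan for Lemma~\ref{lem:contraction1}.}
The plan is to argue by contradiction in the spirit of Lemma~\ref{lem:deg-est}, exploiting the identity \eqref{eq:derivatives} which says that $Dk_\la(u_\infty)$ and $Dk(u_\infty)$ agree as bilinear forms, together with the structural property \eqref{eq:prop-A} of $A$ and the new hypothesis $(J_5)$. Suppose the conclusion fails: then there exist $\eps>0$, a sequence $\la_n\to\infty$, and unit vectors $v_n\in E$ with $\|v_n\|_{\la_n}\le 1$ such that
\[
\bigl\| Dk_{\la_n}(u_\infty)[v_n] - Dk_\infty(u_\infty)[Pv_n] \bigr\|_{\la_n} \ge \eps.
\]
First I would observe that $\|v_n\|_{\la_n}\le 1$ forces $\langle Av_n,v_n\rangle\to 0$ and $(v_n)$ bounded in $E$, so by \eqref{eq:prop-A}, after passing to a subsequence, $v_n\weakto v$ and $Pv_n\weakto v$ with $v\in E_\infty$; also $Av_n\to 0$. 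Note that $Dk_\infty(u_\infty)=P\circ Dk(u_\infty)|_{E_\infty}$, so $Dk_\infty(u_\infty)[Pv_n] = P\,Dk(u_\infty)[Pv_n]$, and since $k_\infty$ is completely continuous (which holds under $(J_3)$), $Dk_\infty(u_\infty)$ is a compact operator on $E_\infty$, giving $Dk_\infty(u_\infty)[Pv_n]\to Dk_\infty(u_\infty)[v]$ strongly.

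The heart of the argument is to estimate $\|w_n\|_{\la_n}^2$ where $w_n := Dk_{\la_n}(u_\infty)[v_n] - Dk_\infty(u_\infty)[Pv_n]$. Using \eqref{eq:norms} the sequence $\|w_n\|_{\la_n}$ is bounded (by $\|Dk(u_\infty)\|\cdot 1 + \|Dk_\infty(u_\infty)\|$), so again by \eqref{eq:prop-A} we may assume $w_n\weakto w$, $Pw_n\weakto w$ with $w\in E_\infty$. Then I would expand, using \eqref{eq:derivatives} for the first term and the definition of the $E_\infty$-inner product for the second,
\[
\|w_n\|_{\la_n}^2
 = \langle Dk_{\la_n}(u_\infty)[v_n],w_n\rangle_{\la_n} - \langle Dk_\infty(u_\infty)[Pv_n],w_n\rangle
 = \langle Dk(u_\infty)[v_n],w_n\rangle - \langle Dk_\infty(u_\infty)[Pv_n],Pw_n\rangle.
\]
Now $(J_5)$ applies to the bounded sequence $(v_n)$ with $\|v_n\|_{\la_n}$ bounded: it yields $Dk(u_\infty)[v_n]\to Dk(u_\infty)[v]$ strongly in $E$. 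Pairing a strongly convergent sequence with the weakly convergent $w_n$ gives $\langle Dk(u_\infty)[v_n],w_n\rangle\to\langle Dk(u_\infty)[v],w\rangle$. For the second term, $Dk_\infty(u_\infty)[Pv_n]\to Dk_\infty(u_\infty)[v]$ strongly and $Pw_n\weakto w$, so it converges to $\langle Dk_\infty(u_\infty)[v],w\rangle$. Finally, since $w\in E_\infty$ and $Dk(u_\infty)[v]$ may have a nonzero $E_\infty^\perp$-component, $\langle Dk(u_\infty)[v],w\rangle = \langle P\,Dk(u_\infty)[v],w\rangle = \langle Dk_\infty(u_\infty)[v],w\rangle$ (here using $v\in E_\infty$ so $P\,Dk(u_\infty)[v]=P\,Dk(u_\infty)[Pv]=Dk_\infty(u_\infty)[v]$). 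Hence $\|w_n\|_{\la_n}^2\to 0$, contradicting $\|w_n\|_{\la_n}\ge\eps$.

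I expect the main obstacle to be bookkeeping the distinction between $Dk$, $Dk_\la$, and $Dk_\infty$ and the projections $P$, $Q$ — in particular making sure that $(J_5)$ is invoked for the correct sequence (it is stated for sequences with $\|u_n\|_{\la_n}$ bounded, which is exactly our $(v_n)$), and that the completely continuous property of $k_\infty$ on the finite-"energy" space $E_\infty$ genuinely gives compactness of its derivative $Dk_\infty(u_\infty)$. One should also double-check that \eqref{eq:prop-A} is legitimately applicable to the auxiliary sequence $(w_n)$, which requires only that $(w_n)$ be bounded in $E$ and $\langle Aw_n,w_n\rangle\to 0$; the latter follows because $\|w_n\|_{\la_n}^2 = \|w_n\|^2 + \la_n\langle Aw_n,w_n\rangle$ is bounded while $\la_n\to\infty$, forcing $\langle Aw_n,w_n\rangle\to 0$.
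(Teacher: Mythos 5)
Your argument is, in structure, exactly the paper's proof: negate the conclusion to get $\la_n\to\infty$ and $v_n$ with $\|v_n\|_{\la_n}\le1$ and $\|w_n\|_{\la_n}\ge\eps$, extract weak limits in $E_\infty$ via \eqref{eq:prop-A}, bound $\|w_n\|_{\la_n}$ via \eqref{eq:norms}, expand $\|w_n\|_{\la_n}^2$ using \eqref{eq:derivatives} and the fact that $\langle\cdot,\cdot\rangle_\la$ agrees with $\langle\cdot,\cdot\rangle$ on $E_\infty$, and pass to the limit using $(J_5)$. All of those steps are correct, including the two side checks you flag at the end.

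The one place you deviate is the second term, $\langle Dk_\infty(u_\infty)[Pv_n],Pw_n\rangle$: you get strong convergence of $Dk_\infty(u_\infty)[Pv_n]$ by invoking complete continuity of $k_\infty$, which you justify from $(J_3)$. But the lemma assumes only $(J_5)$, and it is applied in the proof of Theorem~\ref{thm:nondeg2}, whose hypotheses do not include $(J_3)$; as written your proof therefore establishes a weaker statement than the one needed there. The paper avoids this by applying $(J_5)$ to the sequence $Pv_n$ itself (admissible since $Pv_n\weakto v$ and $\|Pv_n\|_{\la_n}=\|Pv_n\|\le\|v_n\|_{\la_n}\le1$), which gives $Dk(u_\infty)[Pv_n]\to Dk(u_\infty)[v]$ strongly, and then uses $\langle Dk_\infty(u_\infty)[Pv_n],Pw_n\rangle=\langle Dk(u_\infty)[Pv_n],Pw_n\rangle$ for arguments in $E_\infty$. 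Equivalently, $(J_5)$ applied to sequences in $E_\infty$ (where $\|\cdot\|_{\la}$ is $\la$-independent) already shows that $Dk_\infty(u_\infty)$ maps weakly convergent sequences to strongly convergent ones, so the compactness you want follows from $(J_5)$ alone and $(J_3)$ is not needed. With that one substitution your proof coincides with the paper's.
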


\begin{proof}
Arguing by contradiction, suppose that there exist sequences 
$\la_n\to\infty$, $u_n\in E$ with $\|u_n\|_{\la_n}=1$, and
\begin{equation*}
 \|Dk_{\la_n}(u_\infty)[u_n]-Dk_\infty(u_\infty)[Pu_n]\|_\la\geq \eps>0.
\end{equation*}
Then $u_n\weakto u$ in $E$ along a subsequence, and $u\in E_\infty$ by 
\eqref{eq:prop-A}, hence also $Pu_n \weakto u$.
Setting
\begin{equation*}
    v_n:=Dk_{\la_n}(u_\infty)[u_n]-Dk_\infty(u_\infty)[Pu_n]
\end{equation*}
and using \eqref{eq:norms} we see that
\begin{equation*}
\|v_n\|_{\la_n} \le \|Dk_{\la_n}(u_\infty)[u_n]\|_{\la_n}  + \|Dk(u_\infty)[Pu_n]\|
\le \|Dk(u_\infty)\| + \|Dk(u_\infty)\|
\end{equation*}
is bounded uniformly in $n$. We deduce, again by \eqref{eq:prop-A}, that 
$v_n\weakto v$ in $E$ along a subsequence, and that $v\in E_\infty$, hence also 
$Pv_n\weakto v$. Using condition $(J_5)$ we obtain a contradiction:
\begin{equation*}
\begin{aligned}
\eps^2 & \leq \|v_n\|_{\la_n}^2
   =\langle Dk_{\la_n}(u_\infty)[u_n], v_n\rangle_{\la_n}
     - \langle Dk_\infty(u_\infty)[Pu_n], Pv_n\rangle\\
 & = \langle Dk(u_\infty)[u_n],v_n\rangle-\langle Dk(u_\infty)[Pu_n],Pv_n\rangle\\
 & \to \langle Dk(u_\infty)[u],v\rangle-\langle Dk(u_\infty)[u],v\rangle=0.
\end{aligned}
\end{equation*}
\end{proof}

\begin{altproof}{Theorem \ref{thm:set-contraction}}. Let $\be_\la$ be the ball 
measure of non-compactness in $E$, i.~e.\ for a subset $A\subset E$
\[
\be_\la(A) =\inf\{r>0: \text{$A$ can be covered by finitely many
                  $\|\cdot\|_\la$-balls of radius $r$}\}.
\]
We claim that $k_\la$ is a strict $\be_\la$-set contraction in a neighborhood
of $u_\infty$ if $\la$ is large. We refer to \cite{Deimling:1985} for 
properties of this class of maps and the construction of a degree theory. It 
is sufficient to show that 
\begin{equation}\label{eq:set-contraction}
\be_\la(k_\la(A))\le\frac12\be_\la(A) \quad
\text{for $A\subset B_{\de,\la}(u_\infty)$ if $\la$ is large and $\de$ is small.}
\end{equation}
For \eqref{eq:set-contraction} it suffices to prove that $k_\la-k_\infty\circ P$ 
is $\|\cdot\|_\la$-Lipschitz continuous with Lipschitz constant $\frac12$ 
because $k_\infty\circ P$ is completely continuous as a consequence of $(J_3)$, 
and because the sum of a completely continuous map and a Lipschitz map with 
Lipschitz constant $\frac12$ satisfies \eqref{eq:set-contraction}. Now the 
Lipschitz continuity of $k_\la-k_\infty\circ P$ follows easily from:
\[
\begin{aligned}
&\|k_\la(u)-k_\infty(Pu)-(k_\la(v)-k_\infty(Pv))\|_\la\\
&\hspace{1cm}
 \le \|k_\la(u)-k_\la(v)-Dk_\la(u_\infty)[u-v])\|_\la\\
&\hspace{2cm}
 + \|Dk_\la(u_\infty)[u-v]-Dk_\infty(u_\infty)[Pu-Pv]\|_\la\\
&\hspace{2cm}
 + \|k_\infty(Pu)-k_\infty(Pv)-Dk_\infty(u_\infty)[Pu-Pv]\|_\la\\
&\hspace{1cm}
 \le \sup_{w\in B_{\de,\la}(u_\infty)}\|Dk_\la(w)[u-v]-Dk_\la(u_\infty)[u-v]\|_\la\\
&\hspace{2cm}
 + \|Dk_\la(u_\infty)-Dk_\infty(u_\infty)\circ P\|_\la\|u-v\|_\la\\
&\hspace{2cm}
 + \sup_{w\in B_{\de}(u_\infty,E_\infty)}\|Dk_\infty(w)[u-v]-Dk_\infty(u_\infty)[u-v]\|
\end{aligned}
\]
Now $\sup_{w\in B_{\de,\la}(u_\infty)}\|Dk_\la(w)-Dk_\la(u_\infty)\|_\la$ and
$\sup_{w\in B_{\de}(u_\infty,E_\infty)}\|Dk_\infty(w)-Dk_\infty(u_\infty)$ can be made
arbitrarily small by making $\de>0$ small. And 
$\|Dk_\la(u_\infty)-Dk_\infty(u_\infty)\|_\la\circ P$ can be made arbitrarily small
as $\la\to\infty$ as a consequence of Lemma \ref{lem:contraction1}.\\

Since $k_\la$ is a strict $\be_\la$-set contraction in a neighborhood
of $u_\infty$ for $\la$ large, we may argue as in the proof of
Theorem~\ref{thm:index2} to conclude the proof of 
Theorem~\ref{thm:set-contraction}.
\end{altproof}

\begin{altproof}{Theorem \ref{thm:index1a}}
Observe that $(f_1')$ implies that 
\[
K:E\to\R,\quad K(u)=\int_{\R^N} \left(\frac{b}{2}u^2+F(x,u)\right)\,dx,
\] 
is of class $\cC^2$. It remains to prove $(J_3)$ and $(J_5)$. In fact, the 
proof of $(J_3)$ proceeds as in the proof of Theorem~\ref{thm:crit-group1}. 
In order to see $(J_5)$ consider a sequence $u_n\in E$ such that 
$u_n\weakto u$ and $\|u_n\|_{\la_n}$ is bounded for some sequence 
$\la_n\to\infty$, so that $u\in E_\infty=H^1_0(\Om)$. Now assumption $(V_3)$ 
yields a sequence $R_j\to\infty$ such that
\[
\lim_{n\to\infty}\liminf_{j\to\infty}\frac{\|u_n-u\|_{\la_n}^2}{\int_{K_{R_j}^c}|u_n-u|^2}
 \to \infty,
\]
which implies
\[
\int_{K_{R_j}^c}|u_n-u|^2 \to 0 \quad\text{as }j,n\to\infty.
\]
Since $u_n\to u$ in $L^2_{loc}(\R^N)$ we deduce that $u_n\to u$ in $L^2(\R^N)$.
This implies that
\[
\begin{aligned}
|\langle Dk(u_\infty)[u_n]-Dk(u_\infty)[u],v\rangle| 
 &= \left|\int_{\R^N} (b+f'(u_\infty))(u_n-u)v\,dx\right|\\
 &\le c\|u_n-u\|_{L^2(\Om)}\|v\|
\end{aligned}
\]
hence $Dk(u_\infty)[u_n] \to Dk(u_\infty)[u]$ in $E$.\\

Now Theorem \ref{thm:index1a} follows from Theorem~\ref{thm:set-contraction}.
\end{altproof}

\section{The nondegenerate case}\label{sec:nondeg}
In this section we use the notation $f_\la=\id_E-k_\la:E\to E$. The proof 
of Theorem~\ref{thm:nondeg2} is an immediate consequence of the 
following proposition.

\begin{Prop}\label{prop:contraction}
For $\de>0$ small there exists $\La_\de\ge1$ such that the map
\[
g_\la:B_{\de,\la}(u_\infty) \to B_{\de,\la}(u_\infty),\quad
 g_\la(u) := u-(\id_E-Dk_\infty(u_\infty)\circ P)^{-1}\circ f_\la(u),
\]
is well defined and a contraction for $\la\ge\La_\de$.
\end{Prop}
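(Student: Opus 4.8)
The plan is to show that $g_\la$ is a contraction on the complete metric space $B_{\de,\la}(u_\infty)$ (with the $\|\cdot\|_\la$-metric) and that it maps this ball into itself, so that the Banach fixed point theorem applies; the unique fixed point $u_\la$ satisfies $f_\la(u_\la)=0$, i.e.\ it is the unique critical point of $J_\la$ near $u_\infty$, and continuity/differentiability in $\la$ follows from the uniform contraction principle. The key structural point is that $u_\infty$ being a nondegenerate critical point of $J_\infty$ means $\id_{E_\infty}-Dk_\infty(u_\infty)$ is invertible on $E_\infty$; since $k$ maps into $E$ but the ``bad'' directions are controlled, one works with the operator $T:=\id_E-Dk_\infty(u_\infty)\circ P$ on $E$. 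First I would check that $T$ is invertible on $E$: on $E_\infty$ it restricts to $\id_{E_\infty}-Dk_\infty(u_\infty)$ which is an isomorphism by nondegeneracy, and on $E_\infty^\perp$ it is the identity (since $P$ annihilates $E_\infty^\perp$ and the range of $Dk_\infty(u_\infty)\circ P$ lies in $E_\infty$), so $T^{-1}=P(\id_{E_\infty}-Dk_\infty(u_\infty))^{-1}P + Q$ is a bounded operator; here I would need a bound on $\|T^{-1}\|_\la$ uniform in $\la$, which holds because the $E_\infty$-part is $\la$-independent (the $\|\cdot\|_\la$-norm agrees with $\|\cdot\|$ on $E_\infty$) and $Q$ is an orthogonal projection with respect to every $\|\cdot\|_\la$.

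Next I would estimate the Lipschitz constant of $g_\la$. Writing $Dg_\la(u) = \id_E - T^{-1}\circ Df_\la(u) = T^{-1}\bigl(T - Df_\la(u)\bigr) = T^{-1}\bigl(Dk_\la(u) - Dk_\infty(u_\infty)\circ P\bigr)$, it suffices to bound $\|Dk_\la(u)-Dk_\infty(u_\infty)\circ P\|_\la$ uniformly for $u\in B_{\de,\la}(u_\infty)$. I split this as
\[
\|Dk_\la(u)-Dk_\la(u_\infty)\|_\la + \|Dk_\la(u_\infty)-Dk_\infty(u_\infty)\circ P\|_\la.
\]
The second term tends to $0$ as $\la\to\infty$ by Lemma~\ref{lem:contraction1} (which uses $(J_5)$). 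For the first term I use \eqref{eq:norms2}, $\|Dk_\la(u)-Dk_\la(u_\infty)\|_\la\le\|Dk(u)-Dk(u_\infty)\|$, together with the continuity of $Dk$ at $u_\infty$ (here $C^2$-ness of $K$ near $u_\infty$ enters): given $\eta>0$, one can choose $\de>0$ small enough that $\|Dk(u)-Dk(u_\infty)\|<\eta$ whenever $\|u-u_\infty\|<\de$; note $\|u-u_\infty\|\le\|u-u_\infty\|_\la$, so $u\in B_{\de,\la}(u_\infty)$ forces $\|u-u_\infty\|<\de$. Combining, for $\de$ small and $\la\ge\La_\de$ large we get $\sup_{u\in B_{\de,\la}(u_\infty)}\|Dg_\la(u)\|_\la\le\|T^{-1}\|_\la\cdot(\eta + o_\la(1)) \le \tfrac12$, so $g_\la$ is a contraction with constant $\tfrac12$ by the mean value inequality along segments (the ball is convex).

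Finally I would verify the self-map property $g_\la\bigl(B_{\de,\la}(u_\infty)\bigr)\subset B_{\de,\la}(u_\infty)$. Since $u_\infty$ is a critical point of $J_\infty$ we have $k_\infty(u_\infty)=u_\infty$ in $E_\infty$, and because $a=0$ on $\Om$ (equivalently $Au_\infty=0$) one checks $k_\la(u_\infty)=u_\infty$ as well, so $f_\la(u_\infty)=0$ and hence $g_\la(u_\infty)=u_\infty$. Then for $u\in B_{\de,\la}(u_\infty)$,
\[
\|g_\la(u)-u_\infty\|_\la = \|g_\la(u)-g_\la(u_\infty)\|_\la \le \tfrac12\|u-u_\infty\|_\la \le \tfrac{\de}{2} < \de,
\]
so $g_\la$ indeed maps the ball into itself. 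I expect the main obstacle to be the $\la$-uniformity: one must be careful that $\La_\de$ can be chosen depending only on $\de$ (not letting the required largeness of $\la$ blow up as $\de\to0$ in an uncontrolled way), which is why the order of quantifiers is ``fix $\de$ small to kill the first term, then take $\la$ large to kill the second term via Lemma~\ref{lem:contraction1}''; and one must confirm that $\|T^{-1}\|_\la$ does not degenerate as $\la\to\infty$, which relies on the observation that $E_\infty^\perp$ is $\la$-independent and orthogonal to $E_\infty$ in every $\langle\cdot,\cdot\rangle_\la$, so $T$ is block-diagonal with a $\la$-independent block on $E_\infty$ and the identity on $E_\infty^\perp$.
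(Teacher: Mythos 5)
Your invertibility argument for $L=\id_E-Dk_\infty(u_\infty)\circ P$ (block-diagonal, $\la$-uniform bound on $L^{-1}$) and your contraction estimate (splitting $Dg_\la(u)=L^{-1}\bigl(Dk_\la(u)-Dk_\infty(u_\infty)\circ P\bigr)$ into $\|Dk_\la(u)-Dk_\la(u_\infty)\|_\la$, handled by \eqref{eq:norms2} and the continuity of $Dk$ at $u_\infty$, plus $\|Dk_\la(u_\infty)-Dk_\infty(u_\infty)\circ P\|_\la$, handled by Lemma~\ref{lem:contraction1}) match the paper's proof, including the order of quantifiers. But the self-map step contains a genuine error: you claim $f_\la(u_\infty)=0$, i.e.\ $k_\la(u_\infty)=u_\infty$. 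This is false in general. Criticality of $u_\infty$ for $J_\infty$ only gives $k_\infty(u_\infty)=Pk(u_\infty)=u_\infty$, i.e.\ the $E_\infty$-component of $k(u_\infty)$ equals $u_\infty$; it says nothing about $Qk(u_\infty)$. Since $k_\la(u_\infty)$ is defined by $\langle k_\la(u_\infty),v\rangle_\la=\langle k(u_\infty),v\rangle$ for \emph{all} $v\in E$, one has $k_\la(u_\infty)=u_\infty$ only if $\langle k(u_\infty),v\rangle=0$ for every $v\in E_\infty^\perp$, which fails generically: in the NLS application $\langle k(u_\infty),v\rangle=-\int_\Om\bigl(bu_\infty+f(x,u_\infty)\bigr)v\,dx$ need not vanish for $v\in E_\infty^\perp$ (elements of $E_\infty^\perp$ are not supported outside $\Om$). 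The fact $Au_\infty=0$ does not rescue the claim.

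The paper closes exactly this gap with a separate lemma (Lemma~\ref{lem:contraction2}): $\|f_\la(u_\infty)\|_\la\to0$ as $\la\to\infty$, proved by a contradiction/weak-convergence argument using \eqref{eq:k_lambda} and \eqref{eq:prop-A} --- writing $v_n=k_\infty(u_\infty)-k_{\la_n}(u_\infty)$, extracting $v_n,Pv_n\weakto v\in E_\infty$, and showing $\|v_n\|_{\la_n}^2=\langle k(u_\infty),Pv_n\rangle-\langle k(u_\infty),v_n\rangle\to0$. With this one chooses $\La_\de$ so that additionally $\|f_\la(u_\infty)\|_\la\le\de/(2\al)$, whence $\|g_\la(u_\infty)-u_\infty\|_\la\le\|L^{-1}\|_\la\,\|f_\la(u_\infty)\|_\la\le\de/2$, and the triangle inequality together with the contraction constant $\tfrac12$ gives $g_\la\bigl(B_{\de,\la}(u_\infty)\bigr)\subset B_{\de,\la}(u_\infty)$. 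You need to supply this estimate; as written, your verification that $g_\la$ maps the ball into itself does not stand.
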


\begin{altproof}{Theorem~\ref{thm:nondeg2}}
According to Proposition~\ref{prop:contraction} there exists $\de_0>0$ such
that for $0 < \de \le \de_0$ and $\la \ge \La_\de$, the Banach fixed point 
theorem yields a unique fixed point $u_\la \in B_{\de,\la}(u_\infty)$ of $g_\la$, 
hence a zero of $f_\la$, i.~e.\ a critical point of $J_\la$. The map 
\[
[\La_{\de_0},\infty) \to E,\quad \la \mapsto u_\la,
\]
is $C^1$ because $f_\la$ is $C^1$ in $\la$. Finally,
$\|u_\la - u_\infty\|_\la \to 0$ is also a consequence of
Proposition~\ref{prop:contraction}.
\end{altproof}

The proof of Proposition~\ref{prop:contraction} is based on the following 
lemmata.

\begin{Lem}\label{lem:iso}
The bounded operator 
$L:=\id_E-Dk_\infty(u_\infty)\circ P: E\to E$ 
is an isomorphism, and $\|L^{-1}\|_\la \le \al$ is bounded uniformly in $\la$.
\end{Lem}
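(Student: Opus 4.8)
The plan is to exploit the nondegeneracy of $u_\infty$ as a critical point of $J_\infty$, which says precisely that $\id_{E_\infty} - Dk_\infty(u_\infty): E_\infty \to E_\infty$ is an isomorphism. First I would observe that $L = \id_E - Dk_\infty(u_\infty)\circ P$ respects the $\langle\cdot,\cdot\rangle_\la$-orthogonal splitting $E = E_\infty \oplus E_\infty^\perp$: on $E_\infty$ it acts as $\id_{E_\infty} - Dk_\infty(u_\infty)$ (which maps $E_\infty$ to itself since $Dk_\infty(u_\infty)$ is the differential of the gradient of a functional defined on $E_\infty$), while on $E_\infty^\perp$ it acts as the identity (because $P$ kills $E_\infty^\perp$). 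Hence $L = (\id_{E_\infty} - Dk_\infty(u_\infty)) \oplus \id_{E_\infty^\perp}$, and since the first block is invertible by nondegeneracy and the second trivially is, $L$ is an isomorphism with $L^{-1} = (\id_{E_\infty} - Dk_\infty(u_\infty))^{-1} \oplus \id_{E_\infty^\perp}$.

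Next I would control $\|L^{-1}\|_\la$ uniformly in $\la$. The key point is that the splitting $E = E_\infty \oplus E_\infty^\perp$ is $\langle\cdot,\cdot\rangle_\la$-orthogonal for \emph{every} $\la$, and $L^{-1}$ is block-diagonal with respect to it, so
\[
\|L^{-1}\|_\la = \max\left\{ \|(\id_{E_\infty} - Dk_\infty(u_\infty))^{-1}\|_{E_\infty}\,,\ 1 \right\}.
\]
The norm on $E_\infty$ is independent of $\la$ (recall $E_\infty = H^1_0(\Om)$ with its fixed scalar product, and $\langle\cdot,\cdot\rangle_\la$ restricted to $E_\infty$ equals $\langle\cdot,\cdot\rangle$ since $Au = 0$ for $u \in E_\infty$). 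Therefore the right-hand side is a finite constant, which I would call $\al$, that does not depend on $\la$; thus $\|L^{-1}\|_\la \le \al$ for all $\la \ge 0$.

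The only point requiring a little care — and the one I would write out explicitly — is the block-diagonal structure: I need that $Dk_\infty(u_\infty)$ maps $E_\infty$ into $E_\infty$ and that $\langle\cdot,\cdot\rangle_\la$-orthogonality of $E_\infty$ and $E_\infty^\perp$ holds for all $\la$, so that the operator norm of a block-diagonal operator is the max of the norms of its blocks. Both are already in place: the first because $k_\infty = \nabla K_\infty$ is an endomorphism of $E_\infty$ (hence so is its derivative), and the second is noted in the excerpt (``the orthogonal complement of $E_\infty$ with respect to $\langle\cdot,\cdot\rangle_\la$ is independent of $\la$''). There is no real obstacle here; the lemma is essentially a bookkeeping consequence of nondegeneracy plus the $\la$-independence of the relevant subspaces and of the norm on $E_\infty$.
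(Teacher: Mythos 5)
Your proposal is correct and follows essentially the same route as the paper's own (much terser) proof: both reduce the claim to the invertibility of $\id_{E_\infty}-Dk_\infty(u_\infty)$ on $E_\infty$ given by nondegeneracy, together with the facts that the splitting $E=E_\infty\oplus E_\infty^\perp$ is $\langle\cdot,\cdot\rangle_\la$-orthogonal for every $\la$ and that $\|\cdot\|_\la$ restricted to $E_\infty$ is independent of $\la$, yielding the bound $\|L^{-1}\|_\la\le\max\bigl\{1,\|(\id_{E_\infty}-Dk_\infty(u_\infty))^{-1}\|\bigr\}$. Your write-up actually spells out the block-diagonal structure more carefully than the paper does.
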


\begin{proof}
That $L$ is an isomorphism follows easily from the assumption that $u_\infty$ 
is a nondegenerate fixed point of $k_\infty$, which means that
$\id_{E_\infty}-Dk_\infty(u_\infty):E_\infty\to E_\infty$
is an isomorphism. It is also clear that 
$\|L^{-1}\|_\la \le \max\{1, \|u_\infty-Dk_\infty(u_\infty)^{-1}\|\}$
because the norms on $E_\infty$ do not depend on $\la$.
\end{proof}

\begin{Lem}\label{lem:contraction2}
$\| f_\la(u_\infty) \|_\la \to 0$ as $\la \to \infty$.
\end{Lem}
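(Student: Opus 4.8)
Recall $f_\la=\id_E-k_\la$, so $f_\la(u_\infty)=u_\infty-k_\la(u_\infty)$. The plan is first to show that the $E_\infty$-component of $k_\la(u_\infty)$ is $u_\infty$ itself, so that $f_\la(u_\infty)$ lies entirely in $E_\infty^\perp$, and then to show that its $\|\cdot\|_\la$-norm tends to $0$ via a weak-compactness argument based on \eqref{eq:prop-A}.

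For the first step I would test the defining identity $\langle k_\la(u_\infty),v\rangle_\la=\langle k(u_\infty),v\rangle$ against $v\in E_\infty$. Since $Av=0$ one has $\langle w,v\rangle_\la=\langle w,v\rangle$ for every $w\in E$, and $Qk_\la(u_\infty)\in E_\infty^\perp$ is $\langle\cdot,\cdot\rangle_\la$-orthogonal to $v$; hence $\langle Pk_\la(u_\infty),v\rangle=\langle k(u_\infty),v\rangle=DK(u_\infty)[v]=DK_\infty(u_\infty)[v]=\langle k_\infty(u_\infty),v\rangle=\langle u_\infty,v\rangle$, the last equality because $u_\infty$ is a critical point of $J_\infty$. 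Therefore $Pk_\la(u_\infty)=u_\infty$ for every $\la$, and consequently $f_\la(u_\infty)=-Qk_\la(u_\infty)=:-q_\la\in E_\infty^\perp$.

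For the second step, test the same identity against $q_\la$: using $\langle Pk_\la(u_\infty),q_\la\rangle_\la=\langle u_\infty,q_\la\rangle_\la=0$ we get $\|q_\la\|_\la^2=\langle k_\la(u_\infty),q_\la\rangle_\la=\langle k(u_\infty),q_\la\rangle$. By Cauchy--Schwarz and $\|\cdot\|\le\|\cdot\|_\la$ this yields $\|q_\la\|_\la\le\|k(u_\infty)\|$ (consistent with \eqref{eq:k_lambda}); in particular $(q_\la)$ is bounded in $E$, and since $\la\langle Aq_\la,q_\la\rangle=\|q_\la\|_\la^2-\|q_\la\|^2\le\|k(u_\infty)\|^2$ we obtain $\langle Aq_\la,q_\la\rangle\to0$. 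Now given any subsequence, pass to a further one with $q_\la\weakto q$ in $E$; then \eqref{eq:prop-A} gives $q\in E_\infty$, while $q\in E_\infty^\perp$ because $E_\infty^\perp$ is a closed, hence weakly closed, subspace, so $q=0$. Thus $q_\la\weakto0$ along the whole sequence, and therefore $\|q_\la\|_\la^2=\langle k(u_\infty),q_\la\rangle\to0$, which is the assertion.

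The only delicate point is that $A$ restricted to $E_\infty^\perp$ need not be bounded below, so the term $\la\langle Aq_\la,q_\la\rangle$ does not by itself force $q_\la\to0$; the compactness encoded in \eqref{eq:prop-A} is precisely what turns "$q_\la$ bounded and $\langle Aq_\la,q_\la\rangle\to0$'' into the weak convergence $q_\la\weakto0$ needed to kill $\langle k(u_\infty),q_\la\rangle$. Everything else is routine, and neither $(J_5)$ nor any $C^2$-hypothesis on $K$ enters here.
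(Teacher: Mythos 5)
Your proof is correct and runs on the same engine as the paper's: the uniform bound $\|f_\la(u_\infty)\|_\la\le\|k(u_\infty)\|$ coming from \eqref{eq:k_lambda}, the implication \eqref{eq:prop-A} to place weak subsequential limits in $E_\infty$, and the collapse of $\|f_\la(u_\infty)\|_\la^2$ written as a pairing of $k(u_\infty)$ against a weakly convergent sequence. The only (harmless) difference is presentational: you argue directly via the observation $Pk_\la(u_\infty)=u_\infty$, so that $f_\la(u_\infty)=-Qk_\la(u_\infty)\weakto0$ outright, whereas the paper argues by contradiction and never isolates this identity, using instead that $v_n$ and $Pv_n$ share the same weak limit so that $\langle k(u_\infty),Pv_n\rangle-\langle k(u_\infty),v_n\rangle\to0$.
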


\begin{proof}
Arguing by contradiction, suppose there exist $\eps>0$ and $\la_n\to\infty$
such that 
$v_n:=f_{\la_n}(u_\infty)=u_\infty-k_{\la_n}(u_\infty)
 = k_\infty(u_\infty)-k_{\la_n}(u_\infty)$ satisfies $\|v_n\|_{\la_n}\ge\eps$. 
Observe that $\|v_n\|_{\la_n}$ is bounded uniformly in $n$ as a consequence of
\eqref{eq:k_lambda}. Now \eqref{eq:prop-A} implies 
$v_n,Pv_n \weakto v\in E_\infty$ along a subsequence. This in turn implies:
\[
\eps^2 \le \|v_n\|_{\la_n}^2
 = \langle k_\infty(u_\infty),v_n\rangle
    - \langle k_{\la_n}(u_\infty),v_n\rangle_{\la_n}
 = \langle k(u_\infty),Pv_n\rangle - \langle k(u_\infty),v_n\rangle
 \to 0
\]
which is absurd.
\end{proof}

\begin{altproof}{Proposition~\ref{prop:contraction}}
By \eqref{eq:norms2} there exists $\de_1>0$ such that
\begin{equation}\label{satz2:1}
\sup_{u\in B_{\de_1,\la}(u_\infty)} \|Dk_\la(u)-Dk_\la(u_\infty)\|_{\la}
 \leq\frac{1}{4\alpha} \quad\text{for all }\la\geq 0, 
\end{equation}
where $\al>0$ is from Lemma \ref{lem:iso}.
Now we fix $0<\de\leq \de_1$. Using Lemma \ref{lem:contraction1} and 
Lemma \ref{lem:contraction2} there exists $\La_\de$ such that
\begin{equation}\label{satz2:2}
\|Dk_\la(u_\infty)-Dk_\infty(u_\infty)\circ P\|_\la \leq \frac{1}{4\alpha}
\quad\text{ for }\la\geq\La_\de,
\end{equation}
and
\begin{equation}\label{satz2:3}
\|f_\la(u_\infty)\|_\la \leq \frac{\de}{2\alpha}
\quad\text{ for }\la\geq\La_\de.
\end{equation} 
Thus for $\la \geq \La_\de$ and $u,v\in B_{\de,\la}(u_\infty)$ there holds
\begin{equation}\label{satz2:4}
\begin{aligned}
 &\|k_\la(u)-k_\la(v)-Dk_\la(u_\infty)(u-v)\|_\la\\
 &\hspace{1cm}
  \leq\sup_{w\in B_{\de,\la}(u_\infty)}
       \|Dk_\la(w)-Dk_\la(u_\infty)\|_{\la}\cdot\|u-v\|_\la
 \overset{\eqref{satz2:1}}{\leq}\frac{1}{4\alpha}\|u-v\|_\la.
\end{aligned}
\end{equation}
Since $L=\id_E - Dk_\infty(u_\infty)\circ P$ we have
$g_\la=L^{-1}(k_\la-Dk_\infty(u_\infty)\circ P)$. It follows that
\[
\begin{aligned}
\|g_\la(u)-g_\la(v)\|_\la
 &\leq \alpha\|k_\la(u)-k_\la(v)-Dk_\infty(u_\infty)(P(u-v))\|_\la\\
 &\leq \alpha\|k_\la(u)-k_\la(v)-Dk_\la(u_\infty)(u-v)\|_\la\\
 &\hspace{1cm} +\alpha\|Dk_\la(u_\infty)(u-v)-Dk_\infty(u_\infty)(P(u-v))\|_\la\\
 &\overset{\eqref{satz2:4}}{\leq}
    \alpha\frac{1}{4\alpha}\|u-v\|_\la
    +\alpha\|Dk_\la(u_\infty)-Dk_\infty(u_\infty)\circ P\|_\la\cdot\|u-v\|_\la\\
 &\overset{\eqref{satz2:2}}{\leq}\frac14\|u-v\|_\la+\frac14\|u-v\|_\la
  = \frac12\|u-v\|_\la.
\end{aligned}
\]
We also have
\begin{equation*}
\|g_\la(u_\infty)-u_\infty\|_\la
 \leq \|L^{-1}\|_\la\cdot\|f_\la(u_\infty)\|_\la
 \overset{\eqref{satz2:3}}{\leq} \alpha\frac{\de}{2\alpha} = \frac{\de}{2}
\end{equation*}
hence, for $u\in B_{\de,\la}(u_\infty)$ there holds:
\[
\begin{aligned}
\|g_\la(u)-u_\infty\|_\la
 &\leq \|g_\la(u)-g_\la(u_\infty)\|_\la+\|g_\la(u_\infty)-u_\infty\|_\la\\
 &\leq \frac12\|u-u_\infty\|_\la+\frac{\de}{2}
  \leq \frac{\de}{2}+\frac{\de}{2} = \de.
\end{aligned}
\]
Therefore $g_\la$ maps $B_{\de,\la}(u_\infty)$ into itself.
\end{altproof}

\begin{altproof}{Theorem~\ref{thm:nondeg1}}
As in the proof of Theorem~\ref{thm:index1a} one sees that $J_\la$ is
of class $\cC^2$ and that $(J_5)$ holds. Therefore Theorem~\ref{thm:nondeg1} 
follows from Theorem~\ref{thm:nondeg2}. 
\end{altproof}

%
%

%

{\sc Address of the authors:}\\[1em]
 Thomas Bartsch, Mona Parnet\\
 Mathematisches Institut\\
 University of Giessen\\
 Arndtstr.\ 2\\
 35392 Giessen\\
 Germany\\
 Thomas.Bartsch@math.uni-giessen.de\\
 Mona.Parnet@math.uni-giessen.de

\end{document}